\newtheorem{theorem}{Theorem}[section]
\newtheorem*{theorem*}{Theorem}
\newtheorem{lemma}[theorem]{Lemma}
\newtheorem*{lemma*}{Lemma}
\newtheorem{proposition}[theorem]{Proposition}
\newtheorem*{proposition*}{Proposition}
\newtheorem*{corollary*}{Corollary}
\newtheorem*{recall*}{Recall}
\newtheorem*{conjecture*}{Conjecture}
\theoremstyle{definition}
\newtheorem{definition}[theorem]{Definition}
\newtheorem*{definition*}{Definition}
\newtheorem{example}[theorem]{Example}
\newtheorem*{example*}{Example}
\newtheorem*{fact*}{Fact}
\newtheorem{remark}[theorem]{Remark}
\newcommand{\LeftEqNo}{\let\veqno\@@leqno}
\numberwithin{equation}{section}
\newcommand{\R}{\mathbb{R}}
\newcommand{\id}{\mathop{\mathrm{id}}\nolimits}
\newcommand{\Int}{\mathrm{Int}\,}
\newcommand{\Aut}{\mathop{\mathrm{Aut}}\nolimits}
\begin{document}
% \title[short text for running head]{full title}
\title{Computation of the knot symmetric quandle and its application to the plat index of surface-links}

% Only \author and \address are required; other information is
% optional.  Remove any unused author tags.

% author one information
% \author[short version for running head]{name for top of paper}
\author{Jumpei Yasuda}
\address{Department of Mathematics, Graduate School of Science, Osaka University,  Toyonaka, Osaka 560-0043, Japan}
% \curraddr{}
\email{u444951d@ecs.osaka-u.ac.jp}
% \thanks{}

\keywords{Surface-link, Quandle, Symmetric quandle, Plat index}
% % author two information
% \author{}
% \address{}
% \curraddr{}
% \email{}
% \thanks{}

% \subjclass is required.
\subjclass[2020]{Primary 57K45, Secondary 57K10}
\date{}
\dedicatory{}

\begin{abstract}
   A surface-link is a closed surface embedded in the 4-space, possibly disconnected or non-orientable.
   Every surface-link can be presented by the plat closure of a braided surface, which we call a plat form presentation.
   The knot symmetric quandle of a surface-link $F$ is a pair of a quandle and a good involution determined from $F$.
   In this paper, we compute the knot symmetric quandle for surface-links using a plat form presentation.

   As an application, we show that for any integers $g \geq 0$ and $m \geq 2$, there exists infinitely many distinct surface-knots of genus $g$ whose plat indices are $m$.
\end{abstract}

\maketitle

\section{Introduction}\label{Section: Introduction}
A \textit{surface-knot} is a connected, closed surface embedded in $\R^4$, and a \textit{surface-link} is a disjoint union of surface-knots.
% an (unoriented) closed surface embedded in $\R^4$, possibly disconnected and non-orientable, and a \textit{surface-knot} is a connected one.
A \textit{2-knot} is an embedded 2-sphere in $\R^4$.
Two surface-links are said to be \textit{equivalent} to each other if one is ambiently isotopic to the other.

A \textit{quandle} (\cite{Joyce82, Mateev84}) is a non-empty set $X$ with a binary operation satisfying three axioms corresponding to Reidemeister moves.
Quandles are useful for studying oriented classical links and oriented surface-links.
Kamada introduced a \textit{symmetric quandle} (\cite{Kamada2006, Kamada-Oshiro2010}) as a pair of a quandle and its \textit{good involution}.
Symmetric quandles are useful tools for studying unoriented classical links and unoriented surface-links.
As a generalization of the knot group and knot quandle, the \textit{knot symmetric quandle} $X(F)$ of a surface-link $F$ was introduced and studied in \cite{Kamada2006,Kamada2014, Kamada-Oshiro2010}.

In \cite{Rudolph1983}, Rudolph introduced a \textit{braided surface} as an analogy of a braid in surface-knot theory.
A braided surface of degree $m$ with $k$ branch points can be presented by an $r$-tuple of classical $m$-braids, called a \textit{braid system}.
See Definition~\ref{Definition: braid system}.
In \cite{Yasuda21}, it is shown that every surface-link has a \textit{plat form presentation}, which is the plat closure of an \textit{adequate} braided surface.
See \cite{Yasuda21} or Section~\ref{subsection: Braided surfaces and braid systems} for the definition and details.

The aim of this paper is to compute the knot symmetric quandle of a surface-link by using a braid system via a plat form presentation.

\begin{theorem}\label{MainTheoremA}
   Let $F$ be a surface-link, and $S$ be an adequate braided surface $S$ of degree $2m$ providing a plat form for $F$.
   Let $(\beta_1^{-1} \sigma^{\varepsilon_1}_{k_1}\beta_1, \ldots, \beta_r^{-1} \sigma^{\varepsilon_r}_{k_r}\beta_r)$ be a braid system of $S$, where $\sigma_i$ is Artin's generator of $B_{2m}$, $\beta_i \in B_{2m}$, $\varepsilon_i \in \{\pm1\}$, and $1 \leq k_i \leq 2m-1$.
   Then the knot symmetric quandle $X(F)$ has the following presentation:
   \[
      \left\langle x_1, \ldots, x_{2m}~
         \begin{array}{|c}
            \mathrm{Artin}(\beta_j)(x_{k_i}) = \mathrm{Artin}(\beta_j)(x_{k_i+1}) ~(i=1,\ldots,r)\\
            x_{2j-1}=x_{2j}^{-1} ~ (j = 1,\ldots,m)
         \end{array}
       \right\rangle_{\mathrm{sq}},
   \]
   where $\mbox{Artin}(\beta)$ is Artin's automorphism on the free symmetric quandle.
\end{theorem}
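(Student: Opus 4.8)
The plan is to read off a Wirtinger-type presentation of $X(F)$ from the broken surface diagram determined by the plat form, and then to reduce it by Tietze transformations, using the fact that concatenating elementary Wirtinger substitutions along a braid word realises Artin's automorphism on the free symmetric quandle.

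First I would recall, following \cite{Kamada-Oshiro2010}, the presentation of the knot symmetric quandle attached to a broken surface diagram: the generators are the co-oriented sheets (so that $\rho$, written $x\mapsto x^{-1}$, interchanges the two co-orientations of a sheet), and the defining relations are one Wirtinger relation $c = a*b$ at each double point curve --- the sign of the crossing and the substitution $b\leftrightarrow b^{-1}$ being absorbed by the symmetric quandle structure --- together with the good-involution axioms. Then I would describe explicitly the broken surface diagram $D$ of $F$: over the base disc it consists of $2m$ parallel sheets that braid according to the conjugating words $\beta_i$, it has one branch point for each band generator $\beta_i^{-1}\sigma_{k_i}^{\varepsilon_i}\beta_i$, and it is completed by $m$ trivial plat caps pairing the strands $(2j-1,2j)$. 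Adequacy of $S$ is precisely what guarantees that these caps can be taken to be disjoint embedded trivial discs, so that $F$ is a closed surface-link and the caps introduce no double curves of their own; it also ensures that the relations contributed by any ``opposite-end'' caps, if the plat is set up symmetrically, are consequences of the remaining relations.

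Next I would fix as preferred generators the $2m$ sheets $x_1,\dots,x_{2m}$ lying over a base point of the base disc and eliminate every other sheet of $D$ by Tietze moves. Three local computations do the work. (i) In the braiding region, each elementary crossing $\sigma_j^{\pm1}$ occurring inside some $\beta_i$ contributes exactly one new sheet and one Wirtinger relation; eliminating these one crossing at a time shows that the sheets appearing just after $\beta_i$ are $\mathrm{Artin}(\beta_i)(x_1),\dots,\mathrm{Artin}(\beta_i)(x_{2m})$. (ii) At the branch point coming from $\beta_i^{-1}\sigma_{k_i}^{\varepsilon_i}\beta_i$, the over-sheet and the under-sheet of the merging band are two parts of one and the same sheet of the local Whitney umbrella --- slitting the under-sheet along the terminating double curve does not disconnect it --- so they must be identified; in the $\beta_i$-twisted frame these are the $k_i$-th and $(k_i{+}1)$-st strands, hence after (i) the branch point imposes exactly $\mathrm{Artin}(\beta_i)(x_{k_i}) = \mathrm{Artin}(\beta_i)(x_{k_i+1})$ (the sign $\varepsilon_i$ is irrelevant, consistently with the fact that the braid monodromy around a branch point is a band generator). (iii) A trivial plat cap joining strands $2j-1$ and $2j$ glues the corresponding sheets along an arc across which the co-orientation reverses, so $x_{2j-1} = \rho(x_{2j}) = x_{2j}^{-1}$. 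Discarding the eliminated generators and relations leaves exactly the asserted presentation; equivalently, the same presentation can be obtained by a van-Kampen-type computation of the fundamental symmetric quandle of the complement of $S$ directly from its braid monodromy.

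The main obstacle I anticipate is the bookkeeping of co-orientations and of the good involution throughout this elimination: one must check that the sign ambiguities of the Wirtinger relations along the $\beta_i$, the co-orientation reversal at each cap, and the $B_{2m}$-equivariance of $\rho$ all cohere, so that the symbols $\mathrm{Artin}(\beta_i)$ and $x\mapsto x^{-1}$ in the final presentation carry exactly the intended meaning. The second delicate point is verifying that the relations coming from triple points of $D$ and from the opposite-end caps are genuinely redundant, i.e. that the reduced presentation is Tietze-equivalent to the full Wirtinger presentation rather than merely a quotient of it; this is where I would invoke the adequacy of $S$, isolated as a separate lemma comparing the two presentations.
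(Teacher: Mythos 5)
Your steps (i)--(ii) for the braided part are sound and match the paper's treatment: the paper isolates exactly this as Proposition~\ref{Proposition: pres. of knot quandle of braided surface}, proved by rephrasing Kamada's braid-monodromy computation of the knot quandle of a braided surface in the symmetric-quandle setting. The problem is your model of the closure. In this paper the plat closure of an adequate braided surface $S$ is $S\cup A_S$, where $A_S$ is the surface of wicket type traced out over an annulus by the loop of wickets $f_S$; it is \emph{not} a collection of trivial disc caps at two ends, and there are no ``opposite-end caps.'' Unless $S$ is a genuine $2$-dimensional braid, $f_S$ is a nontrivial loop and the projection of $A_S$ has its own double point strata (the pieces of $D_A$ coming from $\tau_j^{\pm1}$ and $\upsilon_j^{\pm1}$, and crossings with $D_S$), so your assertion that adequacy lets the caps ``introduce no double curves of their own'' is false in general, and with it the claim that the only new relations are $x_{2j-1}=x_{2j}^{-1}$.

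This is precisely where the paper's work lies, and it is the part your proposal defers to an unproven ``separate lemma.'' The paper first controls the singular set of $\pi(A_S)$ (Lemma~\ref{Lemma: Adjacency between double point curves in D-S and D-A}: it is a union of simple arcs meeting $\pi(\partial A_S)$), then writes a presentation of $X(F)$ with three families of generators coming from the semi-sheets of $D_S$, $D_L$ and $D_A$ and relation sets $R_1^X$, $R_1^Z$ (Lemma~\ref{Lemma: DS DA R0}), and then eliminates the $D_A$-generators by an induction over the factorization of the adequate braid $\beta_S$ into the elementary pieces $\sigma_{2i-1}^{\pm1}$, $\tau_j^{\pm1}$, $\upsilon_j^{\pm1}$, with a case-by-case check that the relations contributed by each piece are consequences (Lemma~\ref{Lemma: X Y Z, DS R1 R2}); only after this reduction does the orientation incoherence across each wicket (Remark~\ref{Remark: notation of orientations}) yield $x_{2j-1}=x_{2j}^{-1}$. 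Without an argument of this kind (or a proof that one may first reduce to the genuine plat form case, which you do not give), your elimination step does not establish the stated presentation for an arbitrary adequate braided surface, so the proposal has a genuine gap at its central point.
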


We remark that the knot group of $F$ has a similar presentation (Remark~\ref{Remark: pres of knot group}).

The \textit{plat index} (\cite{Yasuda21}) of a surface-link $F$, denoted by $\mbox{Plat}(F)$, is defined as the half of the minimum number of the degree of adequate braided surfaces whose plat closures are equivalent to $F$.
By Theorem~\ref{MainTheoremA}, we obtain the following inequality between the plat index of $F$ and the symmetric quandle coloring number of $F$.

\begin{theorem}\label{Theorem: inequality for plat index}
   For any surface-link $F$ and any finite symmetric quandle $X$, the following inequality holds:
   \[
      \log_{(\# X)}\mathrm{col}_{X}(F) \leq \mathrm{Plat}(F),
   \]
   where $\# X$ is the order of $X$ and $\mbox{col}_X(F)$ is the $X$-coloring number of $F$.
\end{theorem}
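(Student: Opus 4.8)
I would prove Theorem~\ref{Theorem: inequality for plat index} by combining the presentation of $X(F)$ from Theorem~\ref{MainTheoremA} with the standard counting bound for colorings by a finite algebraic structure. The key observation is that if $F$ admits an adequate braided surface of degree $2m$ giving a plat form, then by Theorem~\ref{MainTheoremA} the knot symmetric quandle $X(F)$ has a presentation with exactly $2m$ generators $x_1, \ldots, x_{2m}$ (subject to some relations). An $X$-coloring of $F$ is, by definition, a symmetric-quandle homomorphism from $X(F)$ to $X$; such a homomorphism is completely determined by the images of the generators, so the set of colorings injects into $X^{2m}$. Hence $\mathrm{col}_X(F) \leq (\#X)^{2m}$.

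First I would recall precisely that $\mathrm{col}_X(F) = \#\mathrm{Hom}_{\mathrm{sq}}(X(F), X)$, the number of good-involution-preserving quandle homomorphisms, and note that this number is an invariant of $F$ (independent of the chosen plat form presentation). Next, I would fix a plat form presentation of $F$ realizing the plat index, so that $\mathrm{Plat}(F) = m$ with the braided surface having degree $2m$. Applying Theorem~\ref{MainTheoremA} gives the $2m$-generator presentation of $X(F)$. Then a homomorphism $X(F) \to X$ is uniquely determined by a choice of $(c_1,\ldots,c_{2m}) \in X^{2m}$ satisfying the listed relations; dropping the relations only enlarges the count, so $\mathrm{col}_X(F) \leq \#(X^{2m}) = (\#X)^{2m}$. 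Taking $\log$ base $\#X$ of both sides yields $\log_{\#X}\mathrm{col}_X(F) \leq 2m/2 = m = \mathrm{Plat}(F)$ — wait, I must be careful here: the presentation has $2m$ generators and $\mathrm{Plat}(F) = m$, so the bound is $\log_{\#X}\mathrm{col}_X(F) \leq 2m$, which is weaker than claimed. The sharper bound comes from the relations $x_{2j-1} = x_{2j}^{-1}$ for $j = 1,\ldots,m$: these relations express half of the generators in terms of the other half, so a coloring is actually determined by the $m$ values $c_2, c_4, \ldots, c_{2m}$ (with $c_{2j-1}$ forced to be $\rho(c_{2j})$, where $\rho$ is the good involution of $X$). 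Therefore the colorings inject into $X^m$, giving $\mathrm{col}_X(F) \leq (\#X)^m$, and hence $\log_{\#X}\mathrm{col}_X(F) \leq m = \mathrm{Plat}(F)$.

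The main subtlety — and the step I would treat most carefully — is confirming that the $m$ relations $x_{2j-1} = x_{2j}^{-1}$ genuinely cut the number of free parameters from $2m$ down to $m$ at the level of homomorphisms into \emph{any} symmetric quandle $X$, i.e., that for each $j$ the value $c_{2j-1}$ is forced by $c_{2j}$ via the good involution and contributes no independent choice. This is exactly the role of the good involution in the symmetric quandle formalism: in the free symmetric quandle, $x^{-1}$ denotes the image of $x$ under the canonical good involution, and any symmetric-quandle homomorphism intertwines the two good involutions, so $c_{2j-1} = \rho_X(c_{2j})$ is automatic. Once this is pinned down, the rest is the routine counting argument above: the map $\mathrm{Hom}_{\mathrm{sq}}(X(F),X) \to X^m$, $\varphi \mapsto (\varphi(x_2), \varphi(x_4),\ldots,\varphi(x_{2m}))$, is injective, so $\mathrm{col}_X(F) \leq (\#X)^m$. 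Since this holds for the plat-index-realizing presentation, we get $\log_{\#X}\mathrm{col}_X(F) \leq \mathrm{Plat}(F)$, completing the proof.
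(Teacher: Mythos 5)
Your proposal is correct and follows essentially the same route as the paper: invoke Theorem~\ref{MainTheoremA} to get a $2m$-generator presentation for a plat form realizing $\mathrm{Plat}(F)=m$, and then use the relations $x_{2j-1}=x_{2j}^{-1}$ to see that a symmetric quandle homomorphism to $X$ is determined by the images of only $m$ generators (the paper phrases this as removing the odd-indexed generators via Tietze moves, you phrase it as the forced equality $\varphi(x_{2j-1})=\rho_X(\varphi(x_{2j}))$, which is the same point), yielding $\mathrm{col}_X(F)\leq(\#X)^m$.
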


It is known that the plat index of a surface-link $F$ is 1 if and only if $F$ is either a trivial 2-knot or a trivial non-orientable surface-knot (\cite{Yasuda21}).
Using Theorems~\ref{MainTheoremA} and \ref{Theorem: inequality for plat index}, we show the following theorem.

\begin{theorem}\label{MainTheoremB}
   For any integers $g\geq 0$ and $m \geq 2$, there exist infinitely many distinct orientable surface-knots of genus $g$ whose plat indices are $m$.
\end{theorem}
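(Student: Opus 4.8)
The plan is to produce, for each fixed $g \ge 0$ and $m \ge 2$, an explicit infinite family $\{F_n\}_{n \in \N}$ of orientable genus-$g$ surface-knots together with plat form presentations of degree $2m$, so that $\mathrm{Plat}(F_n) \le m$ by definition, and then to pin down $\mathrm{Plat}(F_n) = m$ and the mutual non-equivalence by a symmetric quandle coloring argument via Theorem~\ref{Theorem: inequality for plat index}. Concretely, I would first construct a ``seed'' orientable surface-knot $F_n$ of genus $g$ that admits a degree-$2m$ adequate braided surface $S_n$ whose braid system uses conjugators $\beta_i$ built from a parameter $n$ (for instance, powers of a fixed braid, or a Brieskorn-type/twist-spun family) in such a way that the resulting knot symmetric quandle $X(F_n)$ computed by Theorem~\ref{MainTheoremA} is sensitive to $n$. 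The genus-$g$ requirement is arranged by attaching $g$ trivial handles (or by choosing the number of branch points and their signs in the braid system so that the Euler characteristic of the plat closure is $2-2g$); adequacy is a condition on the braid system that I would check directly from its defining data, as in \cite{Yasuda21}.

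The second step is the coloring computation. For a well-chosen finite symmetric quandle $X$ (e.g.\ a dihedral quandle $R_p$ with the natural good involution, or an Alexander symmetric quandle over a suitable finite ring), I would count $X$-colorings of $F_n$ using the presentation from Theorem~\ref{MainTheoremA}: an $X$-coloring is an assignment $x_j \mapsto a_j \in X$ satisfying $a_{2j-1} = \rho(a_{2j})$ (the involution) and $\mathrm{Artin}(\beta_i)(a_{k_i}) = \mathrm{Artin}(\beta_i)(a_{k_i+1})$, which reduces to a linear (or affine) system over the ring in the Alexander case. Choosing the family so that $\mathrm{col}_X(F_n) = (\#X)^{m}$ for all $n$ (the maximum allowed by Theorem~\ref{Theorem: inequality for plat index}) forces $\mathrm{Plat}(F_n) \ge m$, hence $\mathrm{Plat}(F_n) = m$. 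To separate the $F_n$ from one another I would use a second invariant that does depend on $n$: either colorings by a different symmetric quandle $X'$ whose coloring number $\mathrm{col}_{X'}(F_n)$ strictly increases with $n$, or a cocycle-type state-sum invariant associated to $X(F_n)$, or simply the isomorphism type of $X(F_n)$ itself (e.g.\ the order of a finite quotient), distinguishing infinitely many values of $n$ by passing to a subsequence if necessary.

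The main obstacle is the simultaneous control of three constraints: keeping the degree exactly $2m$, keeping the genus exactly $g$ and the surface orientable, and still having enough freedom in the braid system to make the coloring number hit $(\#X)^m$ on the nose while another invariant varies with $n$. Orientability restricts the allowed branch-point signs and forces the braid system to lie in an even/orientable sector, which cuts down the available conjugators; the genus constraint then fixes the number of branch points, so essentially all the tunable freedom sits in the $\beta_i$. A natural way around this is to take $F_n$ to be the connected sum (or a ``plat-compatible'' analogue thereof) of a fixed genus-$g$ piece realizing the maximal coloring count with a degree-preserving ``decorating'' piece indexed by $n$; one must verify that such a connected sum has a degree-$2m$ adequate plat form (not merely that each summand does) and that colorings behave multiplicatively enough for the count to stay at $(\#X)^m$. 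I expect this gluing/adequacy bookkeeping, rather than any single hard computation, to be where the real work lies, with the coloring linear algebra and the final non-equivalence being comparatively routine once the family is in place.
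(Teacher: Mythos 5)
Your outline has the right skeleton, and it is the paper's skeleton: exhibit a degree-$2m$ adequate (in fact $2$-dimensional) braided surface so that $\mathrm{Plat}\le m$ by construction, compute $X(F)$ from the braid system via Theorem~\ref{MainTheoremA}, force $\mathrm{Plat}\ge m$ through Theorem~\ref{Theorem: inequality for plat index} by making a finite symmetric quandle coloring number hit $(\#X)^m$, and separate the members of the family by colorings. But the actual content of Theorem~\ref{MainTheoremB} is the existence of such a family, and at exactly that point your proposal says only that you would choose conjugators ``in such a way that'' the quandle is sensitive to the parameter and the coloring count is maximal; no candidate is produced and no computation is done, and it is not evident a priori that all the constraints (degree exactly $2m$, adequacy, genus $g$, orientability, maximal coloring count, a second invariant varying with $n$) can be met simultaneously --- indeed you flag this yourself. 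For comparison, the paper takes the braid system $b(m,p)=(\beta^{-1}\sigma_1\beta,\beta^{-1}\sigma_1^{-1}\beta,\ldots,\beta^{-1}\sigma_{2m-3}\beta,\beta^{-1}\sigma_{2m-3}^{-1}\beta)$ with $\beta=(\sigma_2\sigma_4\cdots\sigma_{2m-2})^p$, $p$ an odd prime, and computes dihedral colorings: $\mathrm{col}_{(R_q,\id)}(F(m,p))=q^m$ if $q=p$ and $=q$ if $q\ne p$. One family of invariants then does both jobs (the case $q=p$ pins the plat index at $m$; the dichotomy in $q$ distinguishes $F(m,p)$ from $F(m,q)$), so no second invariant is needed.

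The more serious gap is your treatment of the genus. Your proposed routes --- attaching trivial handles, or taking a connected sum with a ``decorating'' piece --- require knowing that the resulting surface-knot still admits a degree-$2m$ adequate plat form and that the coloring count is unchanged; the paper provides no connected-sum or stabilization formula for the plat index or for adequacy, so this step would need genuinely new arguments, which you acknowledge but do not supply. The paper sidesteps this entirely: it appends to $b(m,p)$ the $2g$ extra branch-point braids $\beta^{-1}\sigma_1\beta$ and $\beta^{-1}\sigma_1^{-1}\beta$ ($g$ of each sign), which lowers $\chi$ to $2-2g$ while the presentation from Theorem~\ref{MainTheoremA} only acquires duplicates of an existing relation, so $X(F(m,p,g))\cong X(F(m,p))$ and every conclusion (plat index $=m$, mutual inequivalence for distinct primes) carries over verbatim. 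Incidentally, your worry that orientability restricts the branch-point signs is misplaced: braided surfaces are always orientable, and the relevant sign condition here is only that the braid system have trivial product so that it defines a $2$-dimensional braid, whose plat closure is then handled as a genuine plat form.
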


% Theorem~\ref{MainTheoremB} is proved by using this inequality.
% Theorems~\ref{Theorem: inequality for plat index} and \ref{MainTheoremB} are proved in Section~\ref{Section: Proof of main theorem B}.

This paper is organized as follows:
In section~\ref{Section: Plat form presentation}, we introduce a braided surface, a braid system, and a plat form presentation for a surface-link.
In section~\ref{Section: Quandles and symmetric quandles}, we review a quandle and a symmetric quandle.
We prove Theorem~\ref{MainTheoremA} in section~\ref{Section: Proof of main theorem A}, and we prove Theorem~\ref{MainTheoremB} in section~\ref{Section: Proof of main theorem B}.

\section{A plat form presentation}\label{Section: Plat form presentation}
\subsection{A plat form presentation for classical links}
Throughout this paper, we work in the PL category or the smooth category and assume that surface-links are locally flat in the PL category.
Let $n$ be a positive integer, $I = [0,1]$ be the interval, $D^2$ be the square $I^2$ or a $2$-disk in $\R^2$.
Let $Q_n$ be the subset of $n$ points in $D^2$ consisting of $q_k=\left({1}/{2},  {k}/{(n+1)} \right) \in I\times I =D^2$ for $k=1,2,\cdots, n$.

The \textit{braid group} $B_n$ is  the fundamental group $\pi_1(\mathcal{C}_n, Q_n)$ of the configuration space $\mathcal{C}_n$ of $n$ points of $\Int D^2$ with base point $Q_n$.
An element of $B_n$ is called an \textit{$n$-braid}.
A \textit{geometric $n$-braid} is a union of $n$ intervals $\beta$ embedded in $D^2\times I$ such that each component meets every open disk $\Int D^2\times \{t\}$ ($t\in I$) transversely at a single point, and $\partial \beta = Q_n\times \{0,1\}$.
We identify an $n$-braid with an equivalence class of a geometric $n$-braid.
% For a loop $f: (I, \partial I) \to (\mathcal{C}_n, Q_n)$, the geometric $n$-braid $\beta_f$ is defined by $\beta_f = \bigcup_{t\in I} |\partial f(t)|\times \{t\} \subset D^2\times I$.
% By this way, we identify a geometric $n$-braid with a loop $f: (I, \partial I) \to (\mathcal{C}_n, Q_n)$.
We denote by $\sigma_1, \sigma_2, \dots, \sigma_{n-1}$ the standard generators of $B_n$ or their representatives due to Artin (\cite{Artin1925}).

% An \textit{$n$-braid} is a union of $n$ intervals $\beta$ embedded in $D^2\times I$ such that each component meets every open disk $\Int D^2\times \{t\}$ ($t\in I$) transversely at a single point, and $\partial \beta = Q_n\times \{0,1\}$.
% The $n$-braid group $B_n$ is the group consisting of equivalence classes of $n$-braids in $D^2\times I$.
% We identify the braid group $B_n$ with the fundamental group $\pi_1(\mathcal{C}_n, Q_n)$ of the configuration space $\mathcal{C}_n$ of $n$ points of $\Int D^2$ with base point $Q_n$.
% Denote by $\sigma_1, \sigma_2, \dots, \sigma_{n-1}$ the standard generators of $B_n$ or their representatives due to Artin (\cite{Artin1925}).

% In order to introduce the plat closure for a braided surface, we define the plat closure of a geometric braid using wickets.

\begin{definition}[\cite{Brendle-Hatcher2008}]
   A \textit{wicket} is a semicircle in $D^2 \times I$ that meets $D^2 \times \{0\}$ orthogonally at its endpoints in $\Int D^2$.
   A \textit{configuration of $m$ wickets} is a disjoint union of $m$ wickets in $D^2\times I$.
   % The \textit{space of $m$ wickets} $\mathcal{W}_m$ is the space consists of all configurations of $m$ wickets.
\end{definition}

The \textit{enhanced boundary} of a configuration of $m$ wickets $w$, denoted by $\partial w$, is the boundary of $w$, which is $2m$ points of $\Int D^2$, equipped with a partition into $m$ pairs of points such that each pair of points is the boundary of a wicket of $w$.
$|\partial w|$ denotes the $2m$ points $\partial w$ forgetting the partition.
Then two configurations $w$, $w'$ are the same if they have the same enhanced boundary, $\partial w = \partial w'$.

The set $Q_{2m}$ equipped with the partition $\{ \{q_1, q_2\}, \dots, \{q_{2m-1}, q_{2m}\} \}$ bounds a unique configuration of $m$ wickets, which we call the \textit{standard configuration of $m$ wickets} and denote it by $w_0$.

Let $\beta$ be a geometric $2m$-braid in $D^2 \times I \subset \R^2 \times \R = \R^3$.
The \textit{plat closure} of $\beta$, denoted by $\widetilde{\beta}$, is a link obtained by attaching two copies of the standard configurations of $m$ wickets to $\beta$ as shown in Figure~\ref{Figure: A plat form of trefoil}.
Similarly, a link $L$ is in a \textit{plat form} if $L = \widetilde{\beta}$ for some $2m$-braid $\beta$.
Every link is equivalent to a link in a plat form.

\begin{figure}[h]
   \centering
   \includegraphics[height=15mm]{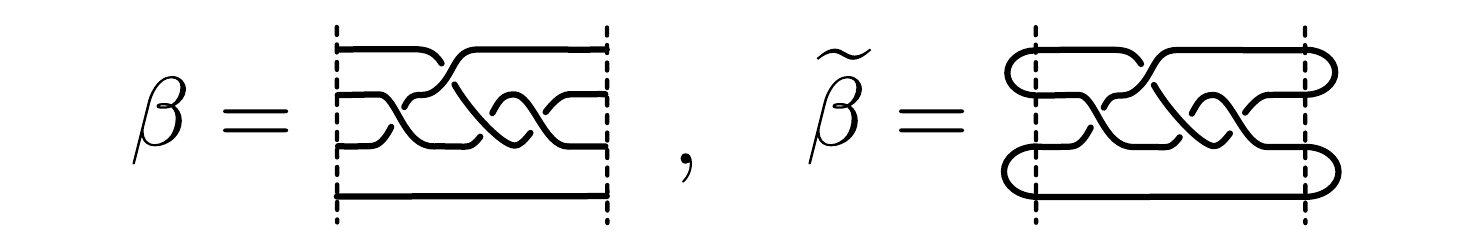}
   \caption{The plat closure of a braid.}
   \label{Figure: A plat form of trefoil}
\end{figure}

% \subsection{Surface-links}
% A \textit{surface-link} is a closed surface embedded in $\R^4$, and a \textit{surface-knot} is a connected surface-link.
% A \textit{$2$-knot} is a surface-knot homeomorphic to a $2$-sphere.
% A \textit{$2$-link} is a surface-link consisting of $2$-spheres.
% Two surface-links $F$ and $F'$ are said to be \textit{equivalent} if they are ambiently isotopic in $\R^4$.
% We denote this by $F \cong F'$.

% Let $h:\R^3\times \R^1 \to \R^1$ be the second factor projection.
% Set $F_{[t]} = F \cap \R^3\times \{t\}$ for $t\in \R$ ($F_{[t]}$ is called the \textit{cross-section} of $F$ at $t$).
% A \textit{motion picture} of $F$ is a $1$-parameter family $\{F_{[t]}\}_{t\in \R}$.
% We often describe surface-links using motion pictures.

% A surface-knot is \textit{trivial} if it is equivalent to a connected sum of standardly embedded $2$-spheres, tori, and projective planes (\cite{Hosokawa-Kawauchi1979}).
% Here standardly embedded projective planes $P_+$ and $P_-$ are illustrated in Figure~\ref{Figure: mp of P2}.

% \begin{figure}[h]
%    \centering
%    \includegraphics[height=7em]{img/img-mp-P2.pdf}
%    \caption{Motion pictures of $P_+$ and $P_-$.}
%    \label{Figure: mp of P2}
% \end{figure}

\subsection{Braided surfaces and braid systems}\label{subsection: Braided surfaces and braid systems}

Let $D_1$ and $D_2$ be the squares $I^2$ in $\R^2$ and let $\mathrm{pr}_i : D_1\times D_2 \to D_i$ $(i=1,2)$ be the $i$-th factor projection.
Let $y_0$ be a fixed base point of $D_2$ with $y_0 \in \partial D_2$.

\begin{definition}[\cite{Rudolph1983}, \cite{Viro90}]
   A surface $S$ embedded in $D_1\times D_2$ is a (\textit{pointed}) \textit{braided surface} of degree $n$ if $S$ satisfies the following conditions:
   \begin{enumerate}
      \item $\pi_S = \mathrm{pr}_2|_S: S \to D_2$ is a branched covering map of degree $n$.
      \item $\partial S$ is the closure of an $n$-braid in the solid torus $D_1\times \partial D_2$.
      \item $\mathrm{pr_1}(\pi_S^{-1}(y_0))=Q_n$.
   \end{enumerate}
   A \textit{$2$-dimensional braid} of degree $n$ is a braided surface $S$ of degree $n$ such that $\mathrm{pr_1}(\pi_S^{-1}(y))=Q_n$ for all $y \in \partial D_2$.
\end{definition}

Throughout this paper, we assume that every braided surface of degree $n$ is \textit{simple}, that is, the preimage of each branch locus of $\pi_S$ consists of $n-1$ points.
An orientation of $S$ is \textit{compatible} with an orientation of $D_2$ if for each regular point $x \in S$ of $\pi_S$, the orientation of $S$ at $x$ is coherent with the orientation of $D_2$ at $\pi_S(x)$ via $\pi_S$.
Unless otherwise stated, an orientation of $S$ is always chosen to be compatible with an orientation of $D_2$.

For two braided surfaces of the same degree, they are said to be \textit{equivalent} if they are ambiently isotopic by an isotopy $\{h_s\}_{s\in I}$ of $D_1\times D_2$ such that each $h_s$ ($s\in I$) is fiber-preserving when we regard $D_1 \times D_2$ as the trivial $D_1$-bundle over $D_2$, and the restriction of $h_s$ to ${\mathrm{pr}_2^{-1}(y_0)}$ is the identity map.

Next, we recall the braid monodromy and a braid system of a braided surface $S$.
Let $\Sigma(S) \subset D_2$ be the branch locus of $\pi_S$.
% The \textit{braid monodromy} of $S$ is a homomorphism $\rho_S: \pi_1(D_2\setminus \Sigma(S), y_0) \to B_n = \pi_1(\mathcal{C}_n, Q_n)$ defined as follows:
For a loop $c: (I,\partial I) \to (D_2\setminus \Sigma(S), y_0)$, we define a loop $\rho_S(c): (I,\partial I) \to (\mathcal{C}_n, Q_n)$ as $\rho_S(c)(t) = \mathrm{pr}_1(\pi_S(c(t))^{-1})$.
Then this map induces a homomorphism $\rho_S: \pi_1(D_2\setminus \Sigma(S), y_0) \to B_n = \pi_1(\mathcal{C}_n, Q_n)$ by $\rho_S([c]) = [\rho_S(c)]$, which is called the \textit{braid monodromy} of $S$.
% we define the braid monodromy of $S$ as a group homomorphism sending $[c]$ to $\rho_S([c]) = [\rho_S(c)] \in \pi_1(\mathcal{C}_n, Q_n)$.

Let $r\geq 1$ be a positive integer.
A \textit{Hurwitz arc system} in $D_2$ (with the base point $y_0$) is an $r$-tuple $\mathcal{A} = (\alpha_1, \cdots, \alpha_r)$ of oriented simple arcs in $D_2$ such that
\begin{enumerate}
   % \item the initial point of $\alpha_i$ is a point in $\Sigma(S)$,
   \item $\alpha_i \cap \partial D_2 = \partial \alpha_i \cap \partial D_2 = \{y_0\}$ and this is the terminal point of $\alpha_i$,
   \item $\alpha_i \cap \alpha_j = \{y_0\}$ ($i\neq j$), and
   \item $\alpha_1, \ldots,  \alpha_r$ appears in this order around the base point $y_0$.
\end{enumerate}
The \textit{starting point set} of $\mathcal{A}$ is the set of initial points of $\alpha_1, \ldots, \alpha_r$.
% Two Hurwitz arc systems are said to be \textit{equivalent} if they are isotopic in $D_2$ rel $\partial D_2 \cup \Sigma(S)$.

Let $\mathcal{A} = (\alpha_1, \cdots, \alpha_r)$ be a Hurwitz arc system with the starting point set $\Sigma(S)$ and $N_i$ be a (small) regular neighborhood of the starting point of $\alpha_i$ and let $\overline{\alpha_i}$ be an oriented arc obtained from $\alpha_i$ by restricting to $D_2\setminus \Int N_i$.
For each $\alpha_i$, let $\gamma_i$ be a loop $\overline{\alpha_i}^{-1} \cdot \partial N_i \cdot \overline{\alpha_i}$ in $D_2\setminus \Sigma(S)$ with base point $y_0$, where $\partial N_i$ is oriented counter-clockwise.
Then $\pi_1(D_2\setminus \Sigma(S), y_0)$ is the free group generated by $[\gamma_1], [\gamma_2], \ldots, [\gamma_r]$.
% Note that $[\partial D_2]$ $= [\gamma_1][\gamma_2] \cdots [\gamma_r]$.

\begin{definition}[\cite{Rudolph1983,Kamada2002_book}]\label{Definition: braid system}
   A \textit{braid system} of $S$ is an $r$-tuple $b_S$ of $n$-braids defined as
   \[
      b_S ~=~ (\rho_S([\gamma_1]) \ldots, \rho_S([\gamma_r])).
   \]
\end{definition}

It is known that $\rho_S([\gamma_i])$ is a conjugation of some generator $\sigma_{k_i}^{\varepsilon_i}$, where $\varepsilon_i \in \{\pm 1\}$ is called the \textit{sign} of the branch point whose projection is the starting point of $\alpha_i$.

We do not mention a \textit{chart description} of $S$ in this paper.
However, it is worth noting that a braid system of $S$ can be easily obtained from its chart description, see \cite{Kamada2002_book} for details.

Let $G$ be a group and $G^r$ the set of $r$-tuples of elements of $G$.
The \textit{slide action} of the braid group $B_r$ on $G^r$ is a left group action defined by
\[
   \mathrm{slide}(\sigma_j)(g_1, \ldots, g_r) ~=~ (g_1, \ldots, g_{j-1}, g_jg_{j+1}g_j^{-1}, g_j, g_{j+2}, \ldots, g_r)
\]
for $(g_1, \ldots, g_r) \in G^r$ and $\sigma_j \in B_r$.
% We remark that this is a left action so that $\mathrm{slide}(\sigma_i\sigma_j)(g_1, \ldots, g_r) = \mathrm{slide}(\sigma_i)(\mathrm{slide}(\sigma_j)(g_1, \ldots, g_r))$.
Two elements of $G^r$ are said to be \textit{Hurwitz equivalent} (or \textit{slide equivalent}) if their orbits of slide action are the same.

It is known that a Hurwitz equivalence class of a braid system of $S$ is uniquely determined by the equivalence class of $S$(cf. \cite{Kamada2002_book, Moishezon1981, Rudolph1985}) although a braid system of $S$ depends on the choice of a Hurwitz arc system $\mathcal{A}$.

\subsection{A plat form presentation for surface-links}\label{subsection: A plat form for surface-links}
% In this subsection, we introduce the plat closure of adequate braided surfaces and a plat form presentation for surface-links.
For an integer $m \geq 1$, $\mathcal{W}_m$ denotes the space consisting of all configurations of $m$ wickets.
% The fundamental group $\pi_1(\mathcal{W}_m, w_0)$ is called the \textit{wicket group} in \cite{Brendle-Hatcher2008}.
% Let $|\partial|: (\mathcal{W}_m, w_0) \to (\mathcal{C}_{2m}, Q_{2m})$ be the continuous map sending $w$ to $|\partial w|$, and $|\partial|_\ast: \pi_1(\mathcal{W}_m, w_0) \to \pi_1(\mathcal{C}_{2m}, Q_{2m}) =B_{2m}$ be the induced homomorphism.
For a loop $f: (I, \partial I) \to (\mathcal{W}_m, w_0)$, the geometric $2m$-braid $\beta_f$ is defined as
\[
   \beta_f ~=~ \bigcup_{t\in I} |\partial f(t)|\times \{t\} \subset D\times I.
\]

\begin{definition}
   A geometric $2m$-braid $\beta$ is \textit{adequate} if $\beta = \beta_f$ for some loop $f: (I, \partial I)\to (\mathcal{W}_m, w_0)$.
\end{definition}

Since a configuration of wickets is unique for its enhanced boundary, two loops $f,g: (I, \partial I) \to (\mathcal{W}_m, w_0)$ are the same if $\beta_f = \beta_g$.

\textit{Hilden's subgroup} $K_{2m}$ is defined as the subgroup of $B_{2m}$ generated by the following elements (\cite{Hilden1975}, cf. \cite{Birman1976}):
\[
   % K_{2m} ~=~ \left\langle \sigma_1,~ \sigma_2\sigma_1\sigma_3\sigma_2,~ \sigma_{2i} \sigma_{2i-1} \sigma_{2i+1}^{-1} \sigma_{2i}^{-1} ~|~ i = 1, \dots, m-1 \right\rangle.
   K_{2m} ~=~ \left\langle
   \begin{array}{c|c}
      \sigma_1,~ \sigma_2\sigma_1\sigma_3\sigma_2,~ \sigma_{2j} \sigma_{2j-1} \sigma_{2j+1}^{-1} \sigma_{2j}^{-1} &
      j = 1, \dots, m-1
   \end{array}
 \right\rangle.
\]
% Then, for each $m \geq 1$, $|\partial|_\ast$ is injective and the image is Hilden's subgroup $K_{2m}$ (\cite{Brendle-Hatcher2008}), that is, $\pi_1(\mathcal{W}_m, w_0)$ is isomorphic to $K_{2m}$.
% restated as follows:
% Let $f: (I, \partial I)\to (\mathcal{W}_m, w_0)$ be a loop.
% Consider a $2m$-braid $\beta_f = \bigcup_{t\in I} |\partial f(t)|\times \{t\} \subset D\times I$, then the isomorphism  sends $[f]\in \pi_1(\mathcal{W}_m, w_0)$ to $ [\beta_f] \in K_{2m}$.

Two $2m$-braids $\sigma_{2j}\sigma_{2j-1}\sigma_{2j+1}\sigma_{2j}$ and $\sigma_{2j}\sigma_{2j-1}\sigma_{2j+1}^{-1}\sigma_{2j}^{-1}$ are denoted by $\tau_j$ and $\upsilon_j$, respectively.
We remark that $\sigma_{2i-1}, \tau_j, \upsilon_j \in K_{2m}$ for $i = 1,\ldots, m$ and $j = 1, \ldots, m-1$.
% Denote $\tau_j = \sigma_{2j}\sigma_{2j-1}\sigma_{2j+1}\sigma_{2j}$, and $\upsilon_j = \sigma_{2j}\sigma_{2j-1}\sigma_{2j+1}^{-1}\sigma_{2j}^{-1}$ for $1 \leq j \leq m-1$.

\begin{figure}[h]
   \centering
   \includegraphics[height = 15mm]{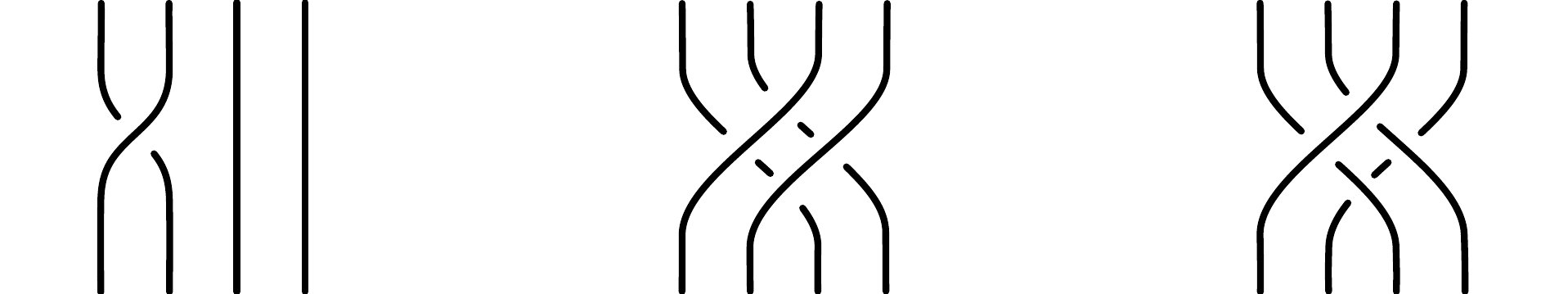}
   \caption{Generators of Hilden's subgroup $K_4$ ($m = 2$)}
   \label{Figure: braids of Hilden subgroup}
\end{figure}

Brendle and Hatcher \cite{Brendle-Hatcher2008} showed that $\pi_1(\mathcal{W}_m, w_0)$ is isomorphic to $K_{2m}$.
Such an isomorphism is given by sending $[f] \in \pi_1(\mathcal{W}_m, w_0)$ to $[\beta_f] \in B_{2m}$.
We remark that Hilden's subgroup $K_{2m}$ consists of elements of $B_{2m}$ represented by some adequate $2m$-braid.

We fix a loop $\mu: (I, \partial I) \to (\partial D_2, y_0)$ which runs once on $\partial D_2$ counter-clockwise.
For a braided surface $S$ of degree $n$, let $\beta_S$ be the geometric $n$-braid in $D_1\times I$ defined by
\[
   \beta_S ~=~ \bigcup_{t\in I} \mathrm{pr}_1(\pi_S^{-1}(\mu(t)))\times \{t\},
\]
where $\pi_S: S \to D_2$ is the simple branched covering map induced from $S$.
Then $\partial S$ is the closure of $\beta_S$ in $D_1\times (I/\partial I) \cong D_1\times \partial D_2$.

\begin{definition}\label{Def:adequate}
   A braided surface $S$ in $D_1 \times D_2$ is \textit{adequate} if $\beta_S$ is adequate.
\end{definition}

The degree of an adequate braided surface is even and every 2-dimensional braid of even degree is adequate.
By definition, a loop $f: (I, \partial I) \to (\mathcal{W}_m, w_0)$ satisfying $\beta_S = \beta_f$ is unique for $S$, thus we denote it by $f_S$.

We assume $D_1\times D_2 \subset \R^2 \times \R^2 = \R^4$.
% Let $y_0 \in \partial D_2$ be the base point when we consider a braided surface.
Let $N$ be a regular neighborhood of $\partial D_2$ in $\R^2\setminus \Int{D_2}$, which is homeomorphic to an annulus $I\times S^1$.
We identify $N$ with $I\times S^1$ by a fixed identification map $\phi: I \times S^1 \to N$ such that $\phi(0,p(t)) = \mu(t) \in \partial D_2$ for all $t \in I$, where $p: I \to S^1 = I/\partial I$ is the quotient map.

\begin{definition}
   Let $S$ be an adequate braided surface of degree $2m$.
   The surface \textit{of wicket type associated with $S$} is a properly embedded surface $A_S$ in $D_1\times N$ defined by
   \[
      A_S ~=~ \bigcup_{t\in I} f_S(t)\times \{p(t)\} ~\subset~ (D\times I) \times S^1 ~=~ D_1\times N.
   \]
\end{definition}

The surface $A_S$ is uniquely determined by $S$ since the loop $f_S$ is unique for $S$.
Furthermore, the intersection of $S$ with $A_S$ is equal to their boundaries; $S \cap A_S = \partial S = \partial A_S$.
Thus, we obtain a surface-link from an adequate braided surface $S$ by taking the union $S \cup A_S$.
% A surface $A$ of wicket type is a union of annuli or M\"{o}bius bands, and that $\partial A= \partial A_f$ is expressed in terms of $|\partial| \circ f: (I, \partial I) \to (\mathcal{C}_{2m}, Q_{2m})$ as follows:
% \[
%    \partial A ~=~ \bigcup_{t\in I} |\partial f(t)|\times \{p(t)\} ~\subset~ D\times S^1 ~=~ D_1 \times \partial D^2.
% \]
% Two surfaces $A$ and $A'$ of wicket type with $\partial A = \partial A'$ are identical since two loops $f$ and $f'$ in $(\mathcal{W}_m, w_0)$ with $|\partial| \circ f = |\partial| \circ f'$ are identical.

% Let $S$ be an adequate braided surface, and let $f: (I, \partial I) \to (\mathcal{W}_m, w_0)$ be a loop with $g_S = |\partial| \circ f$.
% Then $S \cap A_f = \partial S = \partial A_f$.
% We denote $A_f$ by $A_S$ and say that $A_S$ is \textit{associated with $S$}.

\begin{definition}
   Let $S$ be an adequate braided surface, and $A_S$ be the surface of wicket type associated with $S$.
   % The union $\widetilde{S} = S \cup A_S$ is called the \textit{plat closure} of $S$.
   The \textit{plat closure} of $S$, denoted by $\widetilde{S}$, is the union $S \cup A_S$ in $\R^4$.
\end{definition}

\begin{definition}
A surface-link is said to be \textit{in a plat form} if it is the plat closure of some adequate braided surface.
Moreover, a surface-link is said to be \textit{in a genuine plat form} if it is the plat closure of some $2$-dimensional braid.
\end{definition}

In \cite{Yasuda21}, it is shown that every surface-link has a plat form presentation and that every orientable surface-link has a genuine plat form presentation.

\begin{definition}
   Let $F$ be a surface-link.
   The \textit{plat index} of $F$ is defined as the half of the minimum degree of adequate braided surfaces whose plat closures are equivalent to $F$, denoted by $\mbox{Plat}(F)$.
   Similarly, the \textit{genuine plat index} of $F$ is defined as the half of the minimum degree of 2-dimensional braids of even degree whose plat closures are equivalent to $F$, denoted by $\mbox{g.Plat}(F)$.
\end{definition}

\section{Symmetric quandles and symmetric quandle presentations}\label{Section: Quandles and symmetric quandles}

In this section, we introduce the notions of symmetric quandles and their presentation.

\begin{definition}[\cite{Joyce82, Mateev84}]
   A \textit{quandle} is a nonempty set $X$ with a binary operation $(x,y) \mapsto x^y $ that satisfies the following three conditions:
   \begin{itemize}
      \item[(Q1)] $x^x = x$ for any $x\in X$,
      \item[(Q2)] there is a unique binary operation $(x,y) \mapsto x^{y^{-1}}$ such that $(x^y)^{y^{-1}} = (x^{y^{-1}})^y = x$ for any $x,y \in X$, and
      \item[(Q3)] $(x^y)^z = (x^z)^{(x^y)}$ for any $x,y,z\in X$.
   \end{itemize}
   A \textit{rack} is a nonempty set with a binary operation that satisfies (Q2) and (Q3).
\end{definition}

\begin{definition}[\cite{Kamada2006, Kamada-Oshiro2010}]
   Let $X$ be a quandle.
   An involution map $\rho: X \to X$ is called a \textit{good involution} if $\rho$ satisfies the following two conditions:
   \begin{itemize}
      \item[(SQ1)] $\rho(x^y) = \rho(x)^y$ for any $x,y \in X$, and
      \item[(SQ2)] $x^{\rho(y)} = x^{y^{-1}}$ for any $x,y \in X$.
   \end{itemize}
   A \textit{symmetric quandle} is a pair $(X, \rho)$ of a quandle $X$ and its good involution $\rho$.
\end{definition}

For symmetric quandles $(X,\rho)$ and $(X', \rho')$, a map $f: X \to X'$ is a \textit{symmetric quandle homomorphism} if $f$ satisfies $f(x^y) = f(x)^{f(y)}$ and $f(\rho(x)) = \rho'(f(x))$ for any $x,y\in X$.

We write $x^{-1}$ and $x^{-y}$ for $\rho(x)$ and $\rho(x^y)$, respectively, when the good involution of $(X,\rho)$ is clearly.
Then, $(x^y)^{-1} = (x^{-1})^y = x^{-y}$ and $x^{(y^{-1})} = x^{y^{-1}}$ hold for $x,y \in X$.

\begin{example}
   The \textit{dihedral quandle} $R_p$ of order $p$ is the cyclic group of order $p$ with a binary operator defined by $a^b = 2b-a$ for $a, b \in R_p$.
   % Let $R_p$ be the cyclic group $\Z/p\Z$ of order $p$ with a binary operator defined by $a^b = 2b-a$ ($a,b\in \Z/p\Z$).
   % Then $R_p$ is a quandle, which is called \textit{dihedral quandle} of order $p$.
   For every positive integer $p$, the identity map $\id_{R_p}: R_p \to R_p$ is a good involution of $R_p$.
   Thus, $(R_n, \id_{R_n})$ is a symmetric quandle.

   In general, a \textit{kei} (or an \textit{involutive quandle}) is a quandle $X$ such that $(x^y)^{y} = x$ for any $x,y\in X$.
   It is known that the identity map $\id_X$ on a quandle $X$ is a good involution if and only if $X$ is a kei (\cite{Kamada-Oshiro2010}).
\end{example}

\begin{example}[\cite{Kamada-Oshiro2010}]
   Let $K$ be a properly embedded $n$-submanifold in $(n+2)$-manifold $W$, $N(K)$ be a regular neighborhood of $K$ in $W$, $E(K) = \mathrm{cl}(W\setminus N(K))$ be the exterior of $K$, and $*$ be a fixed based point of $E(K)$.
   A \textit{noose} of $K$ is a pair $(D,\alpha)$ of an oriented meridional disk $D$ of $K$ and an oriented arc $\alpha$ in $E(K)$ connecting from a point of $\partial D$ to $*$.
   The \textit{knot full quandle} (\cite{Kamada2006,Kamada-Oshiro2010}) of $(W, K)$ is a quandle consisting of homotopy classes $[(D,\alpha)]$ of nooses of $K$ with a binary operator defined by
   \[
      [(D_1, \alpha_1)]^{[(D_2, \alpha_2)]} = [(D_1, \alpha_1\cdot\alpha_2^{-1}\cdot \partial D_2 \cdot\alpha_2)].
   \]
   Note that the knot full quandle of $K$ is independent of the choice of the based point $*$ when $W$ is connected.
   Thus we denote it by $\widetilde{Q}(W,K)$ or simply $\widetilde{Q}(K)$.
   % Thus we denote it by $\widetilde{Q}(K)$ or $\widetilde{Q}(W, K)$.
   Here, "full" means that $\widetilde{Q}(K)$ contains both nooses of $(D, \alpha)$ and $(-D,\alpha)$, where $-D$ is $D$ with the reverse orientation.
   Let $\rho$ be an involution map of $\widetilde{Q}(K)$ defined by $\rho([(D,\alpha)]) = [(-D,\alpha)]$, then $\rho$ is a good involution.
   We call $(\widetilde{Q}(K), \rho)$ the \textit{knot symmetric quandle} of $K$, denoted by $X(W, K)$ or simply $X(K)$. % (\cite{Kamada2006,Kamada-Oshiro2010}).

   For a pair $(W, K)$ and a symmetric quandle $X$, let $\mbox{Col}_X(F)$ be the set of symmetric quandle homomorphisms from $X(W, K)$ to $X$.
   The \textit{$X$-coloring number} of $(W, K)$ is defined as the cardinal number of $\mbox{Col}_X(F)$, denoted by $\mbox{col}_{X}(W, K)$ or $\mbox{col}_X(K)$.
\end{example}

\begin{example}[\cite{Kamada2014}]
   Let $A$ be a non-empty set and $F(A)$ be the free group on $A$.
   We put $\overline{A} = \{ a^{-1}\in F(A) ~|~ a\in A \}$ and suppose that $A$ and $\overline{A}$ are disjoint.
   Then $\widetilde{\mbox{FR}}(A) = (A\cup \overline{A})\times F(A)$ is a rack equipped with a binary operator given by $(a,w)^{(b,z)} = (a, wz^{-1}bz)$ for $(a,w), (b,z) \in \widetilde{\mbox{FR}}(A)$.

   Let $\sim_{sq}$ be the equivalence relation on $\widetilde{\mbox{FR}}(A)$ generated by $(a,w) \sim_{sq} (a,a^{aw})$ for $a\in (A\cup \overline{A})$ and $w\in F(A)$.
   Then the rack operation on $\widetilde{\mbox{FR}}(A)$ induces a quandle operation on $\widetilde{\mbox{FQ}}(A) = \widetilde{\mbox{FR}}(A)/\sim_{sq}$.
   An involution map $\rho_A: \widetilde{\mbox{FQ}}(A) \to \widetilde{\mbox{FQ}}(A)$ given by $\rho(a^{\pm1}, w) \mapsto (a^{\mp1}, w)$ is a good involution of $\widetilde{\mbox{FQ}}(A)$.
   $(\widetilde{\mbox{FQ}}(A), \rho_A)$ is called the \textit{free symmetric quandle} on $A$, denoted by $\mbox{FSQ}(A)$.
   For $a \in A$ and $w\in F(A)$, elements $(a,w)$ and $(a^{-1}, w)$ in $\mbox{FSQ}(A)$ are denoted by $a^w$ and $a^{-w}$, respectively.
   For $a\in (A \cup \overline{A})$ and the identity $1 \in F(A)$, we simply write $a$ for $a^1 \in \mbox{FSQ}(A)$.
\end{example}

Let $R$ be a subset of $\mbox{FSQ}(A)\times \mbox{FSQ}(A)$.
We denote $R_0$ as $R$.
For $n\geq 1$, we expand $R_{n-1}$ to $R_n$ by the following moves:
\begin{enumerate}
   \item[(E1)] For every $x\in FQ(A)$, add $(x,x)\in R_n$.
   \item[(E2)] For every $(x,y)\in R_{n-1}$, add $(y,x)\in R_n$.
   \item[(E3)] For every $(x,y), (y,z) \in R_{n-1}$, add $(x,z)\in R_n$.
   \item[(R1)] For every $(x,y)\in R_{n-1}$ and $a\in A$, add $(x^a,y^a), (x^{-a},y^{-a})\in R_n$.
   \item[(R2)] For every $(x,y)\in R_{n-1}$ and $t\in \mbox{FQ}(A)$, add $(t^x,t^y), (t^{-x},t^{-y})\in R_n$.
   \item[(Q)] For every $a\in A$, add $(a^a,a) \in R_n$.
\end{enumerate}

Then the union $\langle\langle R \rangle\rangle_{sq} = \bigcup_{n \geq 0} R_n$ is called the \textit{set of symmetric quandle consequences} of $R$, which is the smallest congruence containing $R$ with respect to the axioms of a symmetric quandle (\cite{Kamada2014}).
An element of $\langle\langle R \rangle\rangle_{sq}$ is called a \textit{consequence} of $R$.
Then, the quandle operation and the good involution on $\mbox{FSQ}(A)$ induce a quandle operation and a good involution on $\mbox{FSQ}(A) / \langle\langle R \rangle\rangle_{sq}$.
This symmetric quandle is denoted by $\langle A ~|~ R\rangle_{sq}$.

\begin{definition}[\cite{Kamada2014}]
   The symmetric quandle $\langle A ~|~ R \rangle_{sq}$ is a \textit{symmetric quandle presentation} (or simply  \textit{ presentation}) of $X$ if $X$ is isomorphic to $\langle A ~|~ R\rangle_{sq}$.
\end{definition}

Each element $a\in A$ is called a \textit{generator} of $X$ and each element $r\in R$ is called a \textit{relator} of $X$.
A presentations $\langle A ~|~ R \rangle_{sq}$ is \textit{finite} if both $A$ and $R$ are finite.

We usually write $(a^w, b^w) \in \langle\langle R \rangle\rangle_{sq}$ as an equation $a^w = b^z$.

The following lemmas are proved for racks and quandles (cf. \cite{Kamada2017_book}), and these proofs can be extended to the case of symmetric quandles.

\begin{lemma}[{cf. \cite[Proposition~8.6.1]{Kamada2017_book}}]\label{Lemma: Universal property for free symmetric quandle}
   Let $A$ be a non-empty set and $X$ be a symmetric quandle.
   For a map $f: A\to X$, there is a unique homomorphism $f_\sharp: \mbox{FSQ}(A) \to X$ such that $f = f_\sharp \circ \iota$, where $\iota: A \to \mbox{FSQ}(A)$ is the natural inclusion map.
\end{lemma}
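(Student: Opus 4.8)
The plan is to carry out the standard free-object argument for racks and quandles (as in \cite[Proposition~8.6.1]{Kamada2017_book}), inserting the good involution at every stage. Given a map $f\colon A\to X$, first extend it to $\hat f\colon A\cup\overline{A}\to X$ by $\hat f(a)=f(a)$ and $\hat f(a^{-1})=\rho(f(a))$. For each $a\in A$ the map $S_a\colon X\to X$, $x\mapsto x^{f(a)}$, is a bijection with inverse $x\mapsto x^{f(a)^{-1}}$ by (Q2), so $a\mapsto S_a$ extends uniquely to a right action of $F(A)$ on $X$ by bijections; write $x\cdot w$ for the action of $w\in F(A)$ on $x$, so that $x\cdot a=x^{f(a)}$ and $x\cdot a^{-1}=x^{f(a)^{-1}}$. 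Two consequences of the good-involution axioms will be used repeatedly: by (SQ1), $\rho$ commutes with each $S_a$, hence with the whole action, so $\rho(x)\cdot w=\rho(x\cdot w)$; and by (SQ2), $x\cdot b^{-1}=x^{\rho(f(b))}$ for $b\in A$.

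Next I would define $\tilde f\colon\widetilde{\mbox{FR}}(A)\to X$ on the free symmetric rack by $\tilde f(\varepsilon,w)=\hat f(\varepsilon)\cdot w$ for $\varepsilon\in A\cup\overline{A}$ and $w\in F(A)$. Verifying that $\tilde f$ is a rack homomorphism, $\tilde f\big((\varepsilon,w)^{(\delta,z)}\big)=\tilde f(\varepsilon,w)^{\tilde f(\delta,z)}$, reduces to the conjugation identity $x^{u^{v}}=\big((x^{v^{-1}})^{u}\big)^{v}$ in the quandle $X$ (a direct consequence of self-distributivity), promoted by induction on the word length of $z$ to $x^{u\cdot z}=\big((x\cdot z^{-1})^{u}\big)\cdot z$, together with the two displayed facts about $\rho$ to cover the case $\delta\in\overline{A}$. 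Compatibility with the involutions, $\tilde f\circ\rho_{A}=\rho\circ\tilde f$, is immediate from the definition of $\hat f$ and from $\rho$ being an involution. I would then check that $\tilde f$ is constant on $\sim_{sq}$-classes: the generating relation $(\varepsilon,w)\sim_{sq}(\varepsilon,\varepsilon w)$ translates into $\hat f(\varepsilon)\cdot w=(\hat f(\varepsilon)\cdot\varepsilon)\cdot w$, and $\hat f(\varepsilon)\cdot\varepsilon=\hat f(\varepsilon)$ holds by (Q1) when $\varepsilon\in A$ and by (Q1) together with (SQ2) when $\varepsilon=b^{-1}\in\overline{A}$ (there $\rho(f(b))^{f(b)^{-1}}=\rho(f(b))^{\rho(f(b))}=\rho(f(b))$). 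Hence $\tilde f$ descends to a symmetric quandle homomorphism $f_{\sharp}\colon\mbox{FSQ}(A)\to X$ with $f_{\sharp}(\iota(a))=\tilde f(a,1)=f(a)$.

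For uniqueness, note that every element of $\mbox{FSQ}(A)$ is the class of a pair $(\varepsilon,w)$, and $(\varepsilon,w)$ is obtained from the generator $\iota(a)$ (if $\varepsilon=a$) or from $\rho(\iota(a))$ (if $\varepsilon=a^{-1}$) by successively applying the operations $(-)^{\iota(a_{i})}$ and their (Q2)-inverses prescribed by the letters of $w$. Any symmetric quandle homomorphism $g$ with $g\circ\iota=f$ must respect all of these — in particular $g(x^{y^{-1}})=g(x)^{g(y)^{-1}}$, which follows by applying (Q2) inside $X$ — so $g$ is forced to coincide with $f_{\sharp}$. I expect the only mildly delicate points to be the well-definedness of the $F(A)$-action on $X$ and of $f_{\sharp}$ on the $\sim_{sq}$-quotient; both are settled by (Q1) and (Q2) exactly as in the rack/quandle case, with (SQ1) and (SQ2) guaranteeing that the extra bookkeeping around $\rho$ and negative exponents changes nothing essential.
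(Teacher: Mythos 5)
Your argument is correct and is exactly the extension that the paper has in mind: the paper gives no proof of this lemma, merely citing the rack/quandle case in \cite[Proposition~8.6.1]{Kamada2017_book} and asserting it carries over, and your write-up supplies that adaptation in the standard way (the $F(A)$-action on $X$ via (Q2), the induced map on $\widetilde{\mbox{FR}}(A)$, descent through $\sim_{sq}$ via (Q1) and (SQ2), compatibility with $\rho$ via (SQ1), and uniqueness from generation by the $\iota(a)$ and $\rho(\iota(a))$). No gaps to report.
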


\begin{lemma}[cf. {\cite[Lemma~8.6.3]{Kamada2017_book}}]\label{Lemma: Universal property for symmetric quandle presentation}
   Let $X$ be a symmetric quandle with a presentation $\langle A ~|~ R\rangle_{sq}$.
   For a symmetric quandle $Y$, a map $f: A \to Y$ extends to a homomorphism $f:X \to Y$ if and only if $f(a)^{f(w)} = f(b)^{f(z)}$ holds for $(a^w,b^z)\in R$.
\end{lemma}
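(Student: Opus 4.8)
The plan is to use the presentation to identify $X$ with $\mathrm{FSQ}(A)/\langle\langle R\rangle\rangle_{\mathrm{sq}}$, write $\pi\colon\mathrm{FSQ}(A)\to X$ for the quotient homomorphism and $\iota_X=\pi\circ\iota\colon A\to X$ for the canonical map (so that extending $f$ to a homomorphism $X\to Y$ means producing a symmetric quandle homomorphism $\overline f\colon X\to Y$ with $\overline f\circ\iota_X=f$), and then to run the standard "maps out of a presented object" argument. The one book‑keeping fact I would isolate first is: if $h\colon\mathrm{FSQ}(A)\to Y$ is any symmetric quandle homomorphism with $h\circ\iota=f$, then for every element written in the form $a^w$ (resp. $a^{-w}$) of $\mathrm{FSQ}(A)$ one has $h(a^w)=f(a)^{f(w)}$ (resp. $f(a)^{-f(w)}$). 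This follows by an easy induction on the word length of $w\in F(A)$, unwinding $a^w$ as an expression in the generators $\iota(a')$ built from the quandle operation and the good involution, and using that $h$ is a homomorphism commuting with the good involutions; in particular it re‑derives Lemma~\ref{Lemma: Universal property for free symmetric quandle}.

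For the forward implication I would set $f_\sharp=\overline f\circ\pi$, a homomorphism extending $f$; for a relator $(a^w,b^z)\in R$ we have $\pi(a^w)=\pi(b^z)$ since $R\subseteq\langle\langle R\rangle\rangle_{\mathrm{sq}}$, so applying $\overline f$ and the book‑keeping fact yields $f(a)^{f(w)}=f_\sharp(a^w)=f_\sharp(b^z)=f(b)^{f(z)}$. For the converse, I would start from the homomorphism $f_\sharp\colon\mathrm{FSQ}(A)\to Y$ supplied by Lemma~\ref{Lemma: Universal property for free symmetric quandle} and consider the relation $C=\{(u,v)\in\mathrm{FSQ}(A)\times\mathrm{FSQ}(A)\mid f_\sharp(u)=f_\sharp(v)\}$. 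The hypothesis together with the book‑keeping fact gives $R\subseteq C$, so it suffices to check that $C$ is a congruence with respect to the symmetric quandle axioms, i.e. closed under the moves (E1)--(E3), (R1), (R2), (Q). Once this is done, minimality of $\langle\langle R\rangle\rangle_{\mathrm{sq}}$ forces $\langle\langle R\rangle\rangle_{\mathrm{sq}}\subseteq C$, so $f_\sharp$ is constant on $\langle\langle R\rangle\rangle_{\mathrm{sq}}$‑classes and descends to a homomorphism $\overline f\colon X\to Y$ with $\overline f\circ\pi=f_\sharp$, hence $\overline f\circ\iota_X=f$.

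The only genuine work is verifying that $C$ is a symmetric quandle congruence, and the single point that goes beyond the classical quandle argument (cf. \cite[Lemma~8.6.3]{Kamada2017_book}) is compatibility with the good involution: closure under (E1)--(E3) is just that $C$ is an equivalence relation, and closure under the parts of (R1) and (R2) not involving $\rho$ uses only that $f_\sharp$ is a quandle homomorphism, but closure under the parts producing $(u^{-a},v^{-a})$ and $(t^{-u},t^{-v})$ requires that $f_\sharp$ commutes with $\rho$ and that $Y$ satisfies (SQ1)--(SQ2), while closure under (Q) is immediate from the idempotency axiom (Q1) in $Y$, since $f_\sharp(a^a)=f_\sharp(a)^{f_\sharp(a)}=f_\sharp(a)$. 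I expect this routine bookkeeping, rather than any conceptual obstacle, to be the bulk of the proof; everything else is the usual diagram chase through the universal property of a quotient by a congruence.
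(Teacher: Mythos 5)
Your proposal is correct, and it is essentially the argument the paper has in mind: the paper omits the proof, deferring to the quandle case in \cite[Lemma~8.6.3]{Kamada2017_book} with the remark that it extends to symmetric quandles, and your proof is exactly that standard quotient-by-a-congruence argument (show $C=\{(u,v)\mid f_\sharp(u)=f_\sharp(v)\}$ is closed under (E1)--(E3), (R1), (R2), (Q), so minimality gives $\langle\langle R\rangle\rangle_{sq}\subseteq C$ and $f_\sharp$ descends). The only symmetric-quandle-specific content is the compatibility with the good involution in (R1)--(R2) and the evaluation $f_\sharp(a^w)=f(a)^{f(w)}$, which you correctly isolate and verify.
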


\begin{definition}[cf. \cite{Fenn-Rourke1992}]
   Let $X$ be a symmetric quandle with a presentation $\langle A ~|~ R\rangle_{sq}$.
   The following four moves are called \textit{Tietze moves} on a symmetric quandle presentation.
   \begin{enumerate}
      \item[(T1)] Add a consequence of $R$ to $R$.
      \item[(T2)] Delete a consequence of other relators from $R$.
      \item[(T3)] Introduce a new generator $a$ and a new relator $(a,r)$ to $A$ and $R$, respectively, where $r$ is an element of $F(A)$.
      \item[(T4)] Delete a generator $a$ and a relator $(a,r)$ from $A$ and $R$, respectively, where $(a,r)$ is a consequence of other relators and $a$ does not occur in other relators in $R$.
   \end{enumerate}
\end{definition}

Note that (T2) and (T4) are inverse moves of (T1) and (T3), respectively.
Fenn and Rourke \cite{Fenn-Rourke1992} proved Tietze move theorem for racks, and their proof can be extended for symmetric quandles.

\begin{proposition}[cf. \cite{Fenn-Rourke1992}]\label{Theorem: Tietze theorem for symmetric quandle}
   Any two finite presentations of a symmetric quandle are related by a finite sequence of Tietze moves.
\end{proposition}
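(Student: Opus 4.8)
The plan is to follow the classical Tietze argument — in the form used by Fenn and Rourke \cite{Fenn-Rourke1992} for racks — but instead of converting one presentation into the other directly, to route both through a single common presentation. Fix two finite presentations $\langle A\mid R\rangle_{sq}$ and $\langle B\mid S\rangle_{sq}$ of a symmetric quandle $X$, together with isomorphisms $\phi\colon\langle A\mid R\rangle_{sq}\to X$ and $\psi\colon\langle B\mid S\rangle_{sq}\to X$. After a preliminary renaming of generators — itself a finite sequence of (T3) and (T4) moves — I may assume $A\cap B=\emptyset$. Writing $h=\psi^{-1}\circ\phi$ for the induced isomorphism $\langle A\mid R\rangle_{sq}\to\langle B\mid S\rangle_{sq}$, I would choose, for each $a\in A$, an element $u_a\in\mathrm{FSQ}(B)$ representing $h(a)$, and for each $b\in B$, an element $v_b\in\mathrm{FSQ}(A)$ representing $h^{-1}(b)$. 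These are the words that implement the change of generating set.

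The heart of the argument is to show that $\langle A\mid R\rangle_{sq}$ is connected by Tietze moves to
\[
   P=\langle\, A\sqcup B \mid R\cup S\cup\{(b,v_b):b\in B\}\cup\{(a,u_a):a\in A\}\,\rangle_{sq}.
\]
First I would apply (T3) once for each $b\in B$, adjoining $b$ together with the relator $(b,v_b)$; since every Tietze move preserves the presented symmetric quandle, the result is again a presentation of $X$, now via an isomorphism that sends $b$ to $\phi(v_b)=\psi(b)$. Each relator of $S$ then holds in $X$, and a relation holding in $X$ is automatically a symmetric-quandle consequence of the defining relators of any presentation of $X$ — this is the one place where I would invoke injectivity of a presentation isomorphism, together with Lemma~\ref{Lemma: Universal property for symmetric quandle presentation} — so (T1) lets me adjoin all of $S$. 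The same reasoning, using $\phi(a)=\psi(h(a))=\psi(u_a)$, lets me adjoin all the relators $(a,u_a)$ by (T1), arriving at $P$.

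To finish, I would run the symmetric argument with the roles of $(A,R,\phi)$ and $(B,S,\psi)$ exchanged, using $h^{-1}$ in place of $h$ and the \emph{same} words $u_a$ and $v_b$ chosen above. This shows that $\langle B\mid S\rangle_{sq}$ is connected by Tietze moves to $\langle\, B\sqcup A \mid S\cup\{(a,u_a):a\in A\}\cup R\cup\{(b,v_b):b\in B\}\,\rangle_{sq}$, which is literally the presentation $P$ (same generating set, same relator set, merely listed in a different order). Since being connected by Tietze moves is an equivalence relation — (T2) and (T4) are the inverses of (T1) and (T3) — concatenating the two halves produces a finite sequence of Tietze moves carrying $\langle A\mid R\rangle_{sq}$ to $\langle B\mid S\rangle_{sq}$.

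I expect the genuinely new content to be light. The only symmetric-quandle-specific input is that the elementary lemmas underlying the rack argument survive verbatim: that each Tietze move preserves the presented symmetric quandle, that $\mathrm{FSQ}(A)$ embeds in $\mathrm{FSQ}(A\cup\{a\})$ compatibly with adjoining a defining relator, and that substituting one consequence into another again yields a consequence. In each of these, the extra data — the barred generators $\overline{A}$ and the good involution $\rho$, governed by the moves (E1)--(Q) — enters only formally, exactly as in the rack case. That bookkeeping, rather than any conceptual difficulty, is the main obstacle.
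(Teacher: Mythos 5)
Your proof is correct and is essentially the same argument the paper relies on: the paper simply cites Fenn--Rourke's Tietze theorem for racks and notes the proof extends to symmetric quandles, and the classical common-refinement argument you spell out (connecting both presentations by Tietze moves to $\langle\, A\sqcup B \mid R\cup S\cup\{(a,u_a)\}\cup\{(b,v_b)\}\,\rangle_{sq}$, using (T3)/(T1) and the fact that any relation holding in the presented symmetric quandle is a consequence of the defining relators) is exactly that proof. The only remaining work, which you correctly identify, is the routine check that the elementary lemmas (Tietze moves preserve the presented symmetric quandle, consequences are stable under substitution) go through verbatim in the presence of the good involution.
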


\section{Computation of the knot symmetric quandles}\label{Section: Proof of main theorem A}
In this section, we compute the knot symmetric quandle for surface-links using a plat form presentation.

\subsection{Diagrams and semi-sheets}

First, we introduce a diagram and a semi-sheet for an $n$-submanifold $K$ embedded in $\R^{n+2}$.
Without loss of generality, we may suppose that $K$ is in general position with respect to the projection $\pi: \R^{n+2} = \R \times \R^{n+1} \to \R^{n+1}$.
Let $\Delta_K$ be the closure of the singular set of $\pi(K)$;
\[
   \Delta_K = \mathrm{cl}(\{ x\in \pi(K) \in \R^{n+1} ~|~ \#(\pi^{-1}(x)\cap K) > 1 \}).
\]

A \textit{diagram} $D_K$ of $K$ is a union of $\mathrm{cl}(\pi(K)\setminus N(\Delta_K))$ and the projection of the upper preimage of $N(\Delta_K)$, where $N(\Delta_K)$ is a regular neighborhood of $\Delta_K$.
Figure~\ref{Figure: Local models of diagram} depicts local models of a surface-link diagram ($n=2$).

\begin{figure}[h]
   \centering
   \includegraphics[width = \hsize]{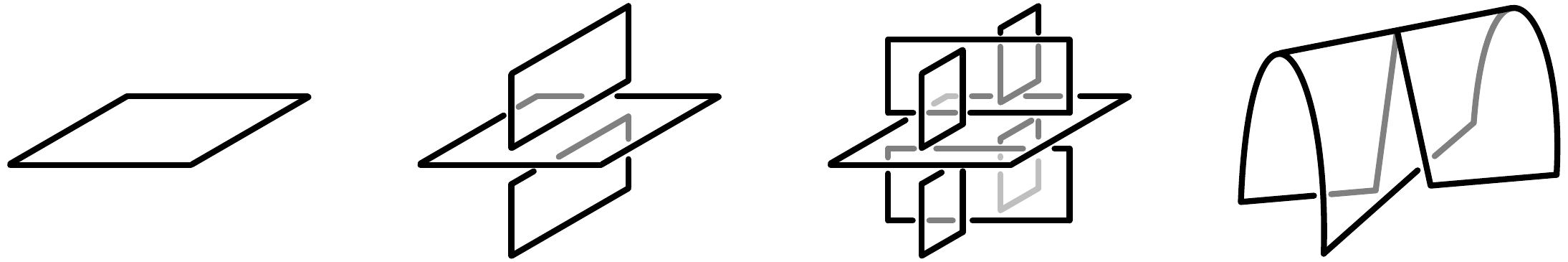}
   \caption{Local models of a diagram of a surface-link.}
   \label{Figure: Local models of diagram}
\end{figure}

Each connected component of $\mathrm{cl}(\pi(K)\setminus N(\Delta_K))$ is called a \textit{semi-sheet} of $D_K$.
Figure~\ref{Figure: Local models of semi-sheet} depicts local models of semi-sheets of a surface-link diagram ($n = 2$).
We also call it a \textit{semi-arc} when $K$ is a classical link.

\begin{figure}[h]
   \centering
   \includegraphics[width = \hsize]{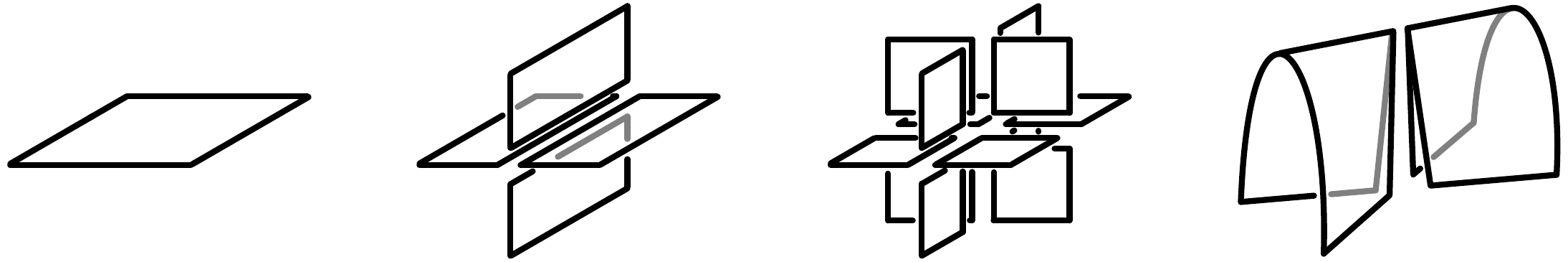}
   \caption{Local models of semi-sheets of a surface-link.}
   \label{Figure: Local models of semi-sheet}
\end{figure}

When we regard $\Delta_K$ as a stratified complex, each $(n-1)$-dimensional stratum of $\Delta_K$ consists of transverse double points (cf. \cite{Rourke-Sanderson1982}), which is called the \textit{double point stratum} of $D_K$.
Each double point stratum of a classical link diagram corresponds to a crossing, and each double point stratum of a surface-link diagram corresponds to a double point curve.

Let $x_1, \ldots, x_n$ be semi-sheets of $D_K$.
Since semi-sheets are orientable (\cite{Kamada2001}), we give them orientations arbitrarily.
For semi-sheets $x_i, x_j, x_s, x_t$ around a double point stratum of $D_K$ as in Figure~\ref{Figure: semi-sheet and its labeling}, we define the \textit{A-relation} and the \textit{B-relation} as follows:
Assume that $x_i$ and $x_j$ are upper semi-sheets and that $x_s$ and $x_t$ are lower semi-sheets (Figure~\ref{Figure: semi-sheet and its labeling}).
The A-relation is defined as $x_i = x_j$ if orientations of $x_i$ and $x_j$ are coherent (Figure~\ref{Figure: A, B-relations}-(a)) and $x_i = x_j^{-1}$ otherwise (Figure~\ref{Figure: A, B-relations}-(b)), where normal vectors on semi-sheets described in Figure~\ref{Figure: A, B-relations} represent their orientations (cf. \cite{Kamada2014}).
Suppose that the normal vector of $x_i$ is directed from $x_s$ to $x_t$.
The B-relation is defined as $x_t = x_s^{x_i}$ if orientations of $x_s$ and $x_t$ are coherent (Figure~\ref{Figure: A, B-relations}-(c)) and $x_t^{-1} = x_s^{x_i}$ otherwise (Figure~\ref{Figure: A, B-relations}-(d)).
The set of the A-relations and B-relations of $D_K$ is denoted by $\mbox{A-rel}_{D_K}$ and $\mbox{B-rel}_{D_K}$, respectively.

\begin{figure}[h]
   \centering
   \includegraphics[width = \hsize]{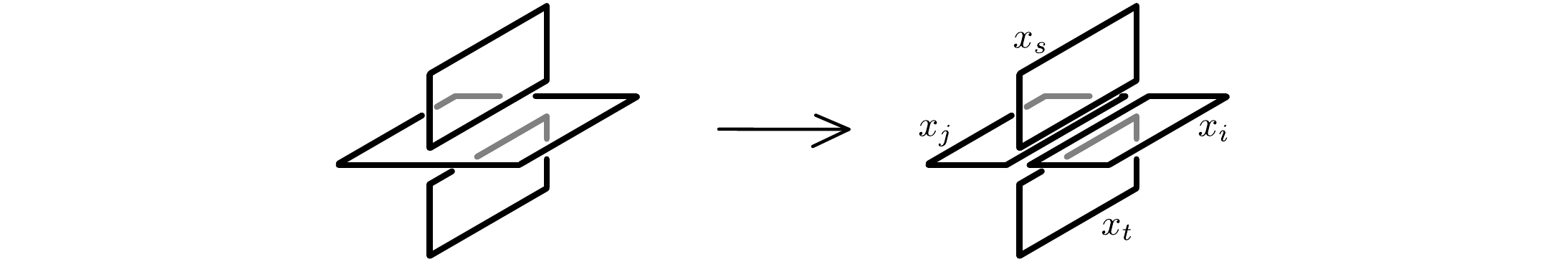}
   \caption{Semi-sheets around a double point strata and their labeling.}
   \label{Figure: semi-sheet and its labeling}
\end{figure}

\begin{figure}[h]
   \centering
   \includegraphics[width = \hsize]{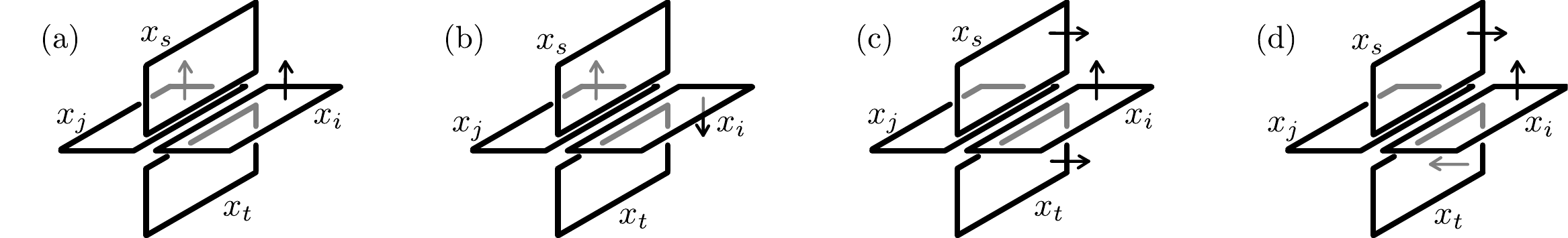}
   \caption{The A-relation and the B-relation.}
   \label{Figure: A, B-relations}
\end{figure}

Using these relations, we define a symmetric quandle $X(D_K)$ by
\[
   X(D_K) ~=~ \left\langle x_1, \ldots, x_{s}~
   \begin{array}{|c}
      \mbox{A-rel}_{D_K}, \mbox{B-rel}_{D_K}
   \end{array}
   \right\rangle_{\mathrm{sq}}.
\]
It is known that $X(D_K)$ is a well-defined symmetric quandle although the A-relation and the B-relation depend on the choice of orientations and labeling of semi-sheets (\cite{Kamada2014}).

\begin{proposition}[\cite{Kamada2014}]\label{Proposition: qdle pres. in 4-space}
   The knot symmetric quandle $X(K)$ is isomorphic to $X(D_K)$.
\end{proposition}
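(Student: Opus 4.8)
This is the symmetric-quandle analogue of the Wirtinger presentation, and I would follow the pattern of Kamada~\cite{Kamada2014}: build a homomorphism $\phi\colon X(D_K)\to X(K)$ directly from the defining presentation and show it is a bijection. Place the basepoint $*$ of $E(K)$ far out along the projection direction $\R$ in $\R^{n+2}=\R\times\R^{n+1}$, so that the vertical segment from a point of $K$ lying over a regular value of $\pi$ up to $*$ meets $K$ only at its initial point. For each semi-sheet $x_i$ of $D_K$, with its (arbitrarily chosen) transverse orientation, pick a regular point $p_i$ in its interior, let $D_i$ be a small meridional disk of $K$ at $p_i$ oriented so that $\partial D_i$ positively links the chosen normal, and let $\alpha_i$ be the vertical arc from $\partial D_i$ to $*$; set $\phi(x_i)=[(D_i,\alpha_i)]$. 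Since $\rho[(D_i,\alpha_i)]=[(-D_i,\alpha_i)]$ is the noose obtained by reversing the normal of $x_i$, the assignment is compatible with the good involutions on generators.

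To see that $\phi$ extends to a homomorphism it suffices, by Lemma~\ref{Lemma: Universal property for symmetric quandle presentation}, to check that the A- and B-relations hold in $X(K)$. For an A-relation, the two upper semi-sheets $x_i,x_j$ at a double point stratum are joined by the sheet of $K$ that passes over it, so $D_i$ can be slid along that sheet to $D_j$ with the vertical arc kept above the rest of the diagram; this gives $\phi(x_i)=\phi(x_j)$, or $\phi(x_i)=\rho(\phi(x_j))$, according to whether the chosen normals of $x_i$ and $x_j$ agree, which is exactly the A-relation. For a B-relation, with $x_s,x_t$ the lower semi-sheets on the two sides of the over-sheet $x_i$ and the normal of $x_i$ pointing from $x_s$ to $x_t$, sliding the meridional disk of the under-sheet from the $x_t$-side to the $x_s$-side drags its connecting arc once across the over-sheet; expanding the definition of the noose operation $[(D,\alpha)]^{[(D',\alpha')]}=[(D,\alpha\cdot(\alpha')^{-1}\cdot\partial D'\cdot\alpha')]$ then yields $[(D_t,\alpha_t)]=[(D_s,\alpha_s)]^{[(D_i,\alpha_i)]}$, up to the good involution when the normals of $x_s,x_t$ disagree, which together with (SQ1)--(SQ2) is the B-relation. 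Hence $\phi\colon X(D_K)\to X(K)$ is well defined.

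Surjectivity is a routine general-position argument: any noose $(D,\alpha)$ is homotopic to one whose disk is a small meridional disk over the interior of a semi-sheet and whose arc runs vertically and then to $*$, and each time the disk or the arc is pushed across a sheet of $K$ during the homotopy the class changes only by (the inverse of) a B-relation consequence, so the resulting class lies in the image of $\phi$. The substantive point is injectivity, i.e.\ completeness of the presentation: a word in the semi-sheet nooses that is trivial in $X(K)$ must already be a symmetric quandle consequence of $\mbox{A-rel}_{D_K}\cup\mbox{B-rel}_{D_K}$. I would argue this as for fundamental quandles/racks adapted to the symmetric setting (cf.~\cite{Fenn-Rourke1992, Kamada2014}): realize the triviality by a homotopy of nooses, put it in general position with respect to $\pi$, and decompose it into elementary steps indexed by the strata of $\pi(K)$ that are crossed. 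A step crossing a double point stratum contributes a B-relation; a step passing a triple point contributes a relation forced by the quandle axioms (Q1)--(Q3); a step passing a branch point contributes one forced by (Q1) together with (SQ1)--(SQ2); and a step crossing no stratum does not change the noose class.

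I expect this last step --- verifying that every elementary move of a generic homotopy, with all of its orientation and good-involution signs at triple points and branch points, is accounted for by a consequence of the A- and B-relations --- to be the main obstacle. When $n=1$ it collapses to the classical Wirtinger computation; when $n=2$ one can alternatively slice $K$ by level hyperplanes $\R\times\R^2\times\{s\}$ to reduce to the classical case, with births, deaths and saddle points each producing only consequences of the relations already present, and then identify the resulting presentation with the semi-sheet one through Tietze moves (Proposition~\ref{Theorem: Tietze theorem for symmetric quandle}).
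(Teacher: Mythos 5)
Your construction is exactly the isomorphism the paper uses: the paper does not reprove this proposition but cites \cite{Kamada2014} (Theorem~4.11 there), and the map it describes is precisely yours, sending each semi-sheet $x_i$ to the noose $[(D_i,\alpha_i)]$ whose meridional disk lies over $x_i$ and whose arc runs out to a base point with sufficiently large first coordinate. Your verification of the A- and B-relations, the surjectivity argument, and the Wirtinger-type completeness argument follow the same route as the cited source, so this is essentially the same approach; the injectivity step you flag as the main obstacle is exactly the content the paper delegates to \cite{Kamada2014} rather than carrying out itself.
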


The isomorphism between $X(D_K)$ and $X(K)$ is given in \cite[Theorem~4.11]{Kamada2014} as follows:
Let $\Sigma = \mathrm{cl}(\pi(K)\setminus N(\Delta_K))$ be the union of semi-sheets of $D_K$.
We may assume that
\[
   K \cap [0,\infty) \times \R^{n+1} ~=~ [0,1] \times \partial \Sigma \cup \{1\} \times \Sigma.
\]
We take a base point $*$ of $\R^4$ such that the first coordinate of $*$ is sufficiently large so that for each $1\leq i \leq n$, we can take a noose $(D_i,\alpha_i)$ of $K$ such that $D_i$ is an oriented meridional disk on $\{1\}\times x_i$ and $\alpha_i$ is a line from a point of $\partial D_i$ to $*$.
Then, a homomorphism sending $x_i \in X(D_K)$ to $[(D_i,\alpha_i)] \in X(K)$ induces an isomorphism between them.

This argument can be extended to the case that $K$ is a properly embedded $n$-submanifold in $\R^{n+2}_+ = \R^{n+1}\times [0,\infty)$ as follows:
Let $\pi: \R^{n+2}_+ = \R \times \R^{n+1}_+ \to \R^{n+1}_+$ be the projection.
Then, $\Delta_K$, $D_K$, and $X(D_K)$ are defined similarly. % to the case of $\R^{n+2}$.
We may assume that
\[
   K \cap [0,\infty) \times \R^{n+1}_+ ~=~ [0,1] \times \partial \Sigma \cup \{1\} \times \Sigma,
\]
where $\Sigma = \pi(K) \setminus N(\Delta_K)$ is a union of semi-sheets of $D_K$.
We take a base point $*$ of $\R^{n+2}_+$ such that the first coordinate of $*$ is sufficiently large so that, for each semi-sheet $x_i$ of $D_K$, we can take a noose $(D_i,\alpha_i)$ of $K$ such that $D_i$ is an oriented meridional disk on $\{1\}\times x_i$ and $\alpha_i$ is a line from a point of $\partial D_i$ to $*$.
Then, we see that the map sending $x_i \in X(D_K)$ to $[(D_i,\alpha_i)] \in X(K)$ is an isomorphism.
Since an $(n+2)$-ball $D^{n+2}$ can be identified with $\R^{n+2}_+ \cup \{\infty\}$, we have the following

\begin{proposition}\label{Proposition: pres. obtained by semi-sheets}
   For an proper embedded $n$-submanifold $K$ in $\R_+^{n+2}$ or $D^{n+2}$ and a diagram $D_K$ of $K$, the knot symmetric quandle $X(K)$ is isomorphic to $X(D_K)$.
\end{proposition}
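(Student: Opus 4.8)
The plan is to make rigorous the sketch already indicated in the paragraph preceding the statement, i.e.\ to run the proof of Proposition~\ref{Proposition: qdle pres. in 4-space} (equivalently \cite[Theorem~4.11]{Kamada2014}) almost verbatim, the only genuinely new point being that the ambient boundary $\partial\R^{n+2}_+=\R^{n+1}$ (resp.\ $\partial D^{n+2}$) meets $K$ only in $\partial K$ and lies entirely ``below'' the diagram, so it creates no double point stratum and hence no extra generators or relations. Concretely, I would first put $K\subset\R^{n+2}_+$ in general position with respect to $\pi\colon\R^{n+2}_+=\R\times\R^{n+1}_+\to\R^{n+1}_+$, form the diagram $D_K$ with semi-sheets $x_1,\dots,x_s$, and after an ambient isotopy arrange the normal form
\[
   K\cap\big([0,\infty)\times\R^{n+1}_+\big)~=~[0,1]\times\partial\Sigma\ \cup\ \{1\}\times\Sigma,
\]
where $\Sigma=\pi(K)\setminus N(\Delta_K)$. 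Choosing a base point $*$ whose first coordinate is large, each semi-sheet $x_i$ determines a noose $(D_i,\alpha_i)$ with $D_i$ a meridional disk of $\{1\}\times x_i$ and $\alpha_i$ a straight arc from $\partial D_i$ to $*$, and I set $\phi(x_i)=[(D_i,\alpha_i)]\in X(K)$.

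The first step is to show that $\phi$ extends to a symmetric quandle homomorphism $\Phi\colon X(D_K)\to X(K)$. By Lemma~\ref{Lemma: Universal property for symmetric quandle presentation} it is enough to verify that every A-relation and every B-relation of $D_K$ holds among the $[(D_i,\alpha_i)]$ in $X(K)$. This is a purely local check in a bicollar of a double point stratum: sliding the meridional disk along $\{1\}\times\Sigma$ and across $N(\Delta_K)$ reproduces exactly the noose identities $x_i=x_j^{\pm1}$ and $x_t^{\pm1}=x_s^{x_i}$ read off from Figure~\ref{Figure: A, B-relations}, with the good involution $\rho$ recording the reversal of orientation of a meridional disk. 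Since this neighbourhood is disjoint from $\partial\R^{n+2}_+$, the computation is literally the one in \cite[Theorem~4.11]{Kamada2014}.

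Next I would construct the inverse. Using the normal form, $E(K)$ deformation retracts (rel $*$) onto a neighbourhood of $\{1\}\times\Sigma$ together with the over/under data along $\Delta_K$; consequently every noose of $K$ is homotopic to a product of the $(D_i,\alpha_i)^{\pm1}$ determined by which semi-sheets its meridional disk passes in front of, and the ambiguity in this expression is generated precisely by the local pictures along $\Delta_K$, i.e.\ by $\mathrm{A}$-relations and $\mathrm{B}$-relations. This yields a well-defined map $X(K)\to X(D_K)$, and comparing the two constructions on meridional disks shows it is a two-sided inverse of $\Phi$, so $X(K)\cong X(D_K)$ for $K\subset\R^{n+2}_+$. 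For the case $K\subset D^{n+2}$ I would identify $D^{n+2}$ with the one-point compactification $\R^{n+2}_+\cup\{\infty\}$ by a homeomorphism sending $\partial D^{n+2}$ to $\partial\R^{n+2}_+\cup\{\infty\}$; a properly embedded $n$-submanifold of $D^{n+2}$ then corresponds to one of $\R^{n+2}_+$, and since $K$ is disjoint from a neighbourhood of $\infty$, the exterior $E(K)$, its nooses, and their homotopies are unaffected by the compactification, so the isomorphism already obtained applies.

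I expect the main obstacle to be the inverse direction: making precise the deformation retraction of $E(K)$ in the half-space (or ball) and thereby certifying both that the nooses $(D_i,\alpha_i)$ generate $X(K)$ and that every relation among them is a consequence of $\mathrm{A}$-$\mathrm{rel}_{D_K}$ and $\mathrm{B}$-$\mathrm{rel}_{D_K}$. In the closed case this is exactly the Wirtinger-type analysis of the fundamental quandle from a diagram; the content here is only the observation that $\partial K\subset\partial\R^{n+2}_+$ lies below the diagram, contributes no double point stratum, and therefore does not perturb that analysis. Once this is noted, everything else is bookkeeping identical to \cite{Kamada2014}.
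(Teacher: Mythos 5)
Your proposal takes essentially the same route as the paper: the paper likewise just transplants the isomorphism of \cite[Theorem~4.11]{Kamada2014} to the half-space case by choosing the base point with sufficiently large first coordinate, taking the nooses $(D_i,\alpha_i)$ over the semi-sheets of $D_K$, and then handling $D^{n+2}$ via the identification with $\R^{n+2}_+\cup\{\infty\}$. One small inaccuracy to fix in your write-up: since $\partial K\subset\partial\R^{n+2}_+$ may itself project with double points (e.g.\ the closed-braid boundary of a braided surface), double point strata of $D_K$ can abut the boundary wall, so your claim that the boundary ``creates no double point stratum'' is not literally true --- but this is harmless, because such boundary-touching strata produce A- and B-relations exactly as interior ones do and the Wirtinger-type analysis goes through unchanged.
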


In this paper, we identify $X(K)$ with $X(D_K)$ by the isomorphism given above.

\subsection{The knot symmetric quandle of braided surfaces}\label{subsection: knot quandle of braided surface}
First, we define Artin's automorphism for the free symmetric quandle.

We fix a Hurwitz arc system $(\alpha_1^1, \ldots, \alpha_n^1)$ on $D_1$ with the starting point set $Q_n$.
Let $x_i \in X(D_1, Q_n)$ be a homotopy class of a noose consisting of an oriented meridional disk of $q_i \in Q_n$ and the oriented arc obtained from $\alpha_i^1$ by restricting to the complement of the meridional disk of $q_i$.
Then $X(D_1, Q_n)$ is the free symmetric quandle generated by $x_1, \ldots, x_n$.

For a geometric $n$-braid $b \subset D_1\times I$, there is an isotopy $\{\varphi_t\}_{t\in [0,1]}$ of $D_1$ such that $\varphi_0 = \id$, $\varphi_t|_{\partial D_1} = \id$, and $\varphi_t(Q_n) = \mbox{pr}_1(b\cap (D_1\times \{t\}))$ for $0 \leq t \leq 1$, where $\mbox{pr}_1: D_1\times I \to D_1$ is the projection.
Then, a map sending $\beta = [b]\in B_n$ to an element $[\varphi_1]$ in the mapping class group of $(D_1, Q_n)$ is a well-defined (injective) homomorphism. See \cite{Kamada2002_book} for details.

Let $\Aut(X(D_1, Q_n))$ be the symmetric quandle automorphism group of $X(D_1, Q_n)$.
The braid group $B_n$ acts on $X(D_1, Q_n)$ by $\beta\cdot [(D,\alpha)] = [(\varphi_1(D), \varphi_1(\alpha))]$, which gives a group homomorphism $\mathrm{Artin}: B_n \to \Aut(X(D_1, Q_n))$.
For $\beta\in B_n$, $\mathrm{Artin}(\beta)$ is called \textit{Artinn's automorphism} (or \textit{braid automorphism}) of $X(D_1, Q_n)$.
Explicitly, for $i = 1,\ldots, n-1$, $\mbox{Artin}(\sigma_i)$ is written as
\[
   \mbox{Artin}(\sigma_i)(x_j) ~=~ \left\{
      \begin{array}{lll}
         x_{i+1}^{x_i^{-1}} & (j=i),\\
         x_{i} & (j=i+1),\\
         x_j & \mbox{otherwise}.
      \end{array}
   \right.
\]

Let $S$ be a braided surface of degree $n$ with $r$ branch points, and $(\beta_1^{-1} \sigma^{\varepsilon_1}_{k_1}\beta_1, \ldots, \beta_r^{-1} \sigma^{\varepsilon_r}_{k_r}\beta_r)$ be a braid system of $S$, where $\beta_i \in B_{2m}$, $\varepsilon_i \in \{\pm1\}$, and $1 \leq k_i \leq 2m-1$.

\begin{proposition}\label{Proposition: pres. of knot quandle of braided surface}
   $X(S)$ has the following presentation:
   \[
      \left\langle
         \begin{array}{c|}
            x_1, \ldots, x_{n}
         \end{array}
         \begin{array}{l}
            \mathrm{Artin}(\beta_j)(x_{k_j}) = \mathrm{Artin}(\beta_j)(x_{k_j + 1})~(j = 1, \ldots, r)
         \end{array}
      \right\rangle_{\mathrm{sq}}.
   \]
   % where each generator $x_i$ is the image of $x_i\in X(D_1,Q_n)$ by the homomorphism $X(D_1,Q_n) \to X(S)$ induced from the inclusion map $(D_1\times \{y_0\}, Q_n\times \{y_0\}) \hookrightarrow (D_1 \times D_2, S)$.
   % where each generator $x_i$ represents a noose at $(q_i, y_0) \in S \subset D_1\times D_2$.
\end{proposition}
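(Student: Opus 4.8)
The plan is to construct an explicit diagram $D_S$ of the braided surface $S$ inside a $4$-ball $D^4 = D_1 \times D_2$, read off its semi-sheets and its A-/B-relations, and then apply Proposition~\ref{Proposition: pres. obtained by semi-sheets} to identify $X(S)$ with $X(D_S)$. First I would fix the Hurwitz arc system $(\alpha_1^1,\dots,\alpha_n^1)$ on $D_1$ used to define Artin's automorphism, together with a Hurwitz arc system $(\alpha_1,\dots,\alpha_r)$ on $D_2$ whose starting point set is $\Sigma(S)$ and whose associated loops $\gamma_1,\dots,\gamma_r$ realize the given braid system $b_S = (\beta_1^{-1}\sigma_{k_1}^{\varepsilon_1}\beta_1,\dots,\beta_r^{-1}\sigma_{k_r}^{\varepsilon_r}\beta_r)$. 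Cutting $D_2$ along the arcs $\overline{\alpha_i}$ decomposes $D_2$ into a disk $D_2'$ containing $y_0$ on which $\pi_S$ is a trivial $n$-fold cover; over $D_2'$ the surface $S$ is $n$ parallel copies of $D_2'$, one for each $q_j \in Q_n$, and these give $n$ semi-sheets $x_1,\dots,x_n$ of the diagram, oriented compatibly with $D_2$.

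Next I would analyze what happens near each branch point. Near the $j$-th branch point, whose local monodromy is $\beta_j^{-1}\sigma_{k_j}^{\varepsilon_j}\beta_j$, the surface $S$ looks (after the fiber-preserving ambient isotopy encoded by $\beta_j$, i.e., after moving the reference fiber along $\overline{\alpha_j}$) like the standard simple branch point whose two colliding sheets are the $k_j$-th and $(k_j+1)$-st strands. The key computational point — and the one I expect to be the main obstacle — is to verify that transporting the semi-sheet labels along $\overline{\alpha_j}$ is exactly the action of $\mathrm{Artin}(\beta_j)$ on $X(D_1,Q_n)$, so that the two sheets meeting at the $j$-th branch point carry the labels $\mathrm{Artin}(\beta_j)(x_{k_j})$ and $\mathrm{Artin}(\beta_j)(x_{k_j+1})$. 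This requires matching two a priori different bookkeeping devices: the braid monodromy $\rho_S$ (defined via $\mathrm{pr}_1$ of preimages of loops in $D_2$) and the mapping-class-group action underlying Artin's automorphism on nooses in $(D_1,Q_n)$. The comparison is essentially the statement that $\rho_S$ and the noose-transport map are intertwined by the identification of $\pi_1(D_2\setminus\Sigma(S),y_0)$ with the free group on the $[\gamma_i]$, which I would justify by the same isotopy-extension argument used to define $\mathrm{Artin}$ in the previous subsection, applied fiberwise along $\overline{\alpha_j}$.

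Once the labels at each branch point are identified, the local picture at a simple branch point contributes no B-relation (the branch point is not a double point stratum) but forces the two participating sheets to be identified as \emph{unoriented} semi-sheets; tracking orientations and the good involution, this yields precisely the relation $\mathrm{Artin}(\beta_j)(x_{k_j}) = \mathrm{Artin}(\beta_j)(x_{k_j+1})$ in the free symmetric quandle, using axiom (SQ1) and the fact that semi-sheets are orientable. Since $S$ has no other double point strata (being a braided surface, $\pi_S$ is an embedding away from the branch locus, so $D_S$ has no genuine crossings beyond the branch points), the set of A-relations is exactly $\{\mathrm{Artin}(\beta_j)(x_{k_j}) = \mathrm{Artin}(\beta_j)(x_{k_j+1}) : j = 1,\dots,r\}$ and there are no B-relations. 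Therefore $X(D_S) = \langle x_1,\dots,x_n \mid \mathrm{Artin}(\beta_j)(x_{k_j}) = \mathrm{Artin}(\beta_j)(x_{k_j+1})\ (j=1,\dots,r)\rangle_{\mathrm{sq}}$, and Proposition~\ref{Proposition: pres. obtained by semi-sheets} gives $X(S) \cong X(D_S)$, which is the claimed presentation. I would close by remarking that independence of the presentation from the choices of Hurwitz arc systems and orientations follows from the well-definedness of $X(D_S)$ stated after its definition, together with invariance of the Hurwitz class of $b_S$.
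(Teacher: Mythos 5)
Your overall route (build a diagram $D_S$ of $S$ and apply Proposition~\ref{Proposition: pres. obtained by semi-sheets}) is different from the paper's, which never uses a diagram of $S$: the paper cuts $(D_1\times D_2, S)$ along a Hurwitz arc system, computes the knot symmetric quandle of the piece over a neighborhood of each branch point (one relation $x_{k_j}^j = x_{k_j+1}^j$), identifies the local generators with the base-fiber generators via $x_i^j = \mathrm{Artin}(\beta_j)(x_i)$, and concludes because the union of these pieces is a deformation retract of $(D_1\times D_2, S)$. A diagrammatic proof is in principle possible, but as written yours has a genuine gap.

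The fatal step is the claim that $D_S$ has no double point strata beyond the branch points and hence no B-relations, with exactly $n$ semi-sheets. The branched covering $\pi_S\colon S\to D_2$ being ``almost an embedding'' is irrelevant: the diagram comes from the projection $\R^4\to\R^3$, under which the $n$ points of each fiber $\mathrm{pr}_1(\pi_S^{-1}(y))$ are squashed onto a line, and whenever the strands braid (which is exactly what a nontrivial braid monodromy means) two sheets cross, producing double point curves; moreover each simple branch point has a double point arc terminating at it, and it is this terminating arc, not an ``A-relation at the branch point,'' that identifies the two colliding sheets. So the true $D_S$ has many more than $n$ semi-sheets and plenty of B-relations, which must then be eliminated by Tietze moves to reach the stated presentation. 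You can also see the inconsistency internally: the relators $\mathrm{Artin}(\beta_j)(x_{k_j}) = \mathrm{Artin}(\beta_j)(x_{k_j+1})$ are nontrivial words in the quandle operation (e.g.\ terms like $x_{i+1}^{x_i^{-1}}$), and in a diagram such operations can only be produced by under-crossings, i.e.\ by B-relations at double point curves; a crossing-free diagram could only ever yield relations identifying generators up to the involution, never the Artin words. The part you correctly flag as the key point --- that transport along $\overline{\alpha_j}$ acts as $\mathrm{Artin}(\beta_j)$ --- is indeed the heart of the matter (it is the paper's identity $x_i^j = \mathrm{Artin}(\beta_j)(x_i)$ in $X(W_j^{(2)}, S_j^{(2)})$), but to make your argument work you must either keep the full diagram with its double point curves and eliminate the extra generators, or abandon the diagram and argue as the paper does on the level of nooses over a decomposition of $D_2$.
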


This proposition can be proved by rephrasing the proof of \cite[Proposition~27.2]{Kamada2002_book} in the sense of symmetric quandles.
Thus we only sketch an outline of the proof.

\begin{proof}
   Let $S$ be a braided surface of degree $n$, $\Sigma(S) = \{ y_1, \ldots, y_r\}$ the branch locus of $\pi_S$, and $\mathcal{A} = (\alpha_1,\ldots, \alpha_n)$ a Hurwitz arc system on $D_2$ with the starting point set $\Sigma(S)$.
   We denote $(b_1, \ldots, b_n)$ a braid system of $S$ associated with $\mathcal{A}$, where $b_j = \beta_j^{-1} \sigma_{k_j}^{\varepsilon_j} \beta_j \in B_n$.

   Let $N(y_j)$ be a closed regular neighborhood of $y_j$ in $D_2$, and $y_0^{[j]} = \partial N(y_j) \cap \alpha_j$ be a based point of $N(y_j)$.
   We may assume that $S\cap (D_1\times \{y_0^j\}) = Q_n \times \{y_0^j\}$.
   Then $S_j^{(1)} = S \cap (D_1\times N(y_j))$ is a braided surface over $N(y_j)$ with a single branch point.
   Let $W_j^{(1)} = N(y_j)\times D_2$, and $x_i^{j} \in X(W_j^{(1)}, S_j^{(1)})$ denote the element obtained from $x_i\in X(D_1, Q_n)$ by the natural identification between $Q_n$ and $Q_n\times y_0^{[j]} \subset W_j$.
   Then, $(\sigma_{k_j}^{\varepsilon_J})$ is a braid system of $S_j^{(1)}$ and we can see that
   \[
      X(W_j^{(1)}, S_j^{(1)}) ~=~ \left\langle
      \begin{array}{c|}
         x_1^j, \ldots, x_{n}^j
      \end{array}
      \begin{array}{l}
         x_{k_j}^j = x_{k_j + 1}^j
      \end{array}
   \right\rangle_{\mathrm{sq}}.
   \]

   Next, let $N(\alpha_j)$ be a closed regular neighborhood of $\alpha_j$ in $D_2$ and $W_j^{(2)} = D_1\times N(\alpha_j)$.
   Then, $S_j^{(2)} = S \cap W_j^{(2)}$ is again a braided surface, and we have $x_i^j = \mbox{Artin}(\beta_j)(x_i)$ in $X(W_j^{(2)},S_j^{(2)})$.
   Thus, for each $j = 1, \ldots, r$, we have
   \[
      X(W_j^{(2)}, S_j^{(2)}) ~=~ \left\langle
      \begin{array}{c|}
         x_1, \ldots, x_{n}
      \end{array}
      \begin{array}{l}
         \mbox{Artin}(\beta_j)(x_{k_j}) = \mbox{Artin}(\beta_j)(x_{k_j+1})
      \end{array}
   \right\rangle_{\mathrm{sq}}.
   \]

   Finally, Let $W^{(3)} = D_1\times \cup_{j=1}^r N(\alpha_i)$.
   Then $S^{(3)} = \cup_{j=1}^r S_j^{(2)}$ is a braided surface over $\cup_{j=1}^r N(\alpha_i)$, and $X(W^{(3)}, S^{(3)})$ has the following presentation:
   \[
      X(W^{(3)}, S^{(3)}) ~=~ \left\langle
         \begin{array}{c|}
            x_1, \ldots, x_{n}
         \end{array}
         \begin{array}{l}
            \mathrm{Artin}(\beta_j)(x_{k_j}) = \mathrm{Artin}(\beta_j)(x_{k_j + 1})~(j = 1, \ldots, r)
         \end{array}
      \right\rangle_{\mathrm{sq}}.
   \]
   Since $(W^{(3)}, S^{(3)})$ is a deformation retract of $(D_1\times D_2, S)$, we complete the proof.
\end{proof}

\subsection{A proof of Theorem~\ref{MainTheoremA}}
Let $F$ be a surface-link in $\R^4$.
We may assume that $F$ is in a plat form, and we denote $S$ as an adequate braided surface of degree $2m$ such that $F = S \cup A_S$.
Let $L = \partial S$ be a classical link in a solid torus $D_1\times \partial D_2$.
By the projection $\pi: \R^4 = \R \times \R^3 \to \R^3$, we obtain a diagram $D_F$ of $F$.
We denote $D_S = D_F \cap \pi(S)$, $D_A = D_F \cap \pi(A_S)$, and $D_L = D_F \cap \pi(L)$.
Here, $D_F$ is a diagram of $L$ in an annulus $\pi(D_1\times \partial D_2)$.

Let $\sigma_{2i-1}$, $\tau_j$, and $\upsilon_j$ be adequate geometric $2m$-braids described in Figure~\ref{Figure: adequate braid gamma-j} ($i = 1, \ldots, m$, $j = 1, \ldots, m-1$), and $\sigma_{2i-1}^{-1}$, $\tau_j^{-1}$, and $\upsilon_j^{-1}$ be their inverses.
By deforming $S$ equivalently in $D_1\times D_2$, we may assume that $\beta_S$ is either the trivial $2m$-braid or a product of copies of $\sigma_{2i-1}^{\pm 1}$, $\tau_j^{\pm 1}$, and $\upsilon_j^{\pm 1}$.
Furthermore, by deforming $S$ equivalently in $D_1\times D_2$, we may add $\sigma_{2i-1} \sigma_{2i-1}^{-1}$ ($i = 1, \ldots, m-1$) to the end of $\beta_S$ so that the semi-sheet of $D_A$ containing $\pi(q_{2i}, y_0) \in \pi(D_1\times D_2)$ is different from the semi-sheet of $D_A$ containing $\pi(q_{2j}, y_0)$ for any $i \neq j$, where $q_{2i} \in Q_{2m}$.
We remark that the semi-sheet of $D_A$ containing $\pi(q_{2i}, y_0)$ also contians $\pi(q_{2i-1}, y_0)$.
As a result, $\beta_S$ is a product of $c$ adequate $2m$-braids $\gamma_1, \ldots, \gamma_c$.
Let $0 = t_0 < t_1 < \cdots < t_{c-1} < t_c = 1$ be the partition such that $\beta_S \cap (D_1 \times [t_{i-1},t_i]) = \gamma_i$, and let $A_i = A_S \cap ((D\times I)\times p([t_{i-1},t_i]))$ be the subsurface of $A_S$, where $p: I \to S^1 = I/\partial I$ is the quotient map (see Section~\ref{subsection: A plat form for surface-links}).
Let $D_{A, i} = \pi(A_i) \cap D_A$ denote a diagram of $A_i$, then $D_{A, i}$ consists of copies of subdiagram described in Figure~\ref{Figure: diagrams of A-j}.

\begin{figure}[h]
   \centering
   \includegraphics[width = \hsize]{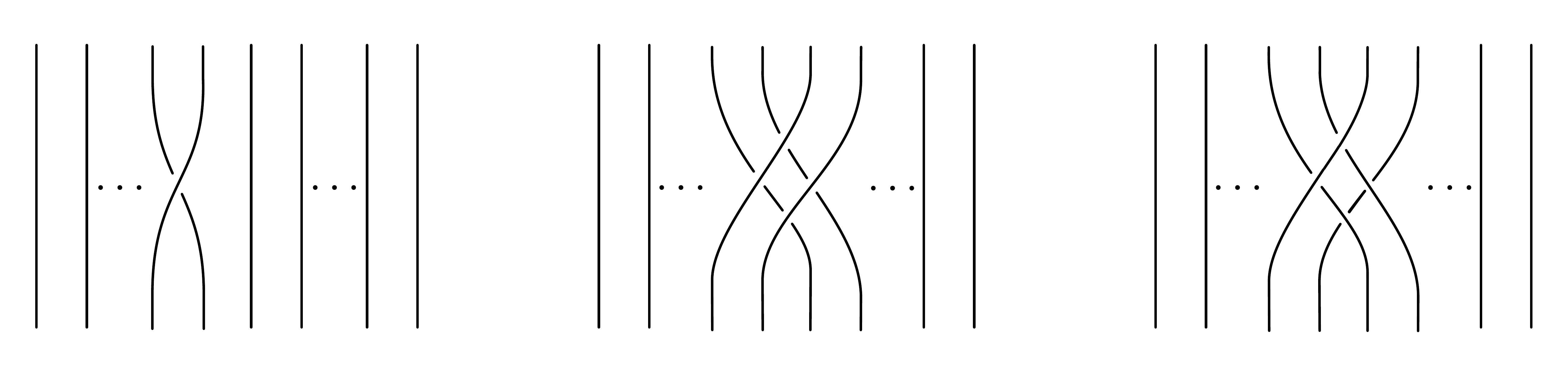}
   \caption{Geometric adequate $2m$-braids representing $\sigma_{2i-1}$ (left), $\tau_j = \sigma_{2j}\sigma_{2j-1}\sigma_{2j+1}\sigma_{2j}$ (center), and $\upsilon_j = \sigma_{2j}\sigma_{2j-1}\sigma_{2j+1}^{-1}\sigma_{2j}^{-1}$ (right) for $i = 1, \ldots, m$ and $j = 1, \ldots, m-1$.}
   \label{Figure: adequate braid gamma-j}
\end{figure}

\begin{lemma}\label{Lemma: Adjacency between double point curves in D-S and D-A}
   The singular set $\Delta_A$ of $\pi(A_S)$ is a disjoint union of simple arcs, and each simple arc in $\Delta_A$ intersects with $\pi(\partial A_S)$.
\end{lemma}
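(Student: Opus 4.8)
\emph{Plan.} I would use the factorisation $\beta_S=\gamma_1\cdots\gamma_c$ into the adequate braids $\gamma_i\in\{\sigma_{2l-1}^{\pm1},\tau_l^{\pm1},\upsilon_l^{\pm1}\}$ of Figure~\ref{Figure: adequate braid gamma-j} fixed above, together with the induced decomposition $A_S=A_1\cup\cdots\cup A_c$, and read off $\Delta_A$ one slab at a time from the local models in Figure~\ref{Figure: diagrams of A-j}. First note that $A_S$ is a properly embedded surface without branch points --- it is swept out by the configurations $f_S(t)$, each a disjoint union of $m$ wicket arcs. Hence $\Delta_A$ is a compact graph whose vertices are triple points (where six double point arcs meet) and boundary points on $\pi(\partial A_S)$ (where a single double point arc ends), and there is no branch point available to end a double point arc elsewhere. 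So it suffices to show that (i) $\pi(A_S)$ has no triple point, whence $\Delta_A$ is a compact $1$-manifold with $\partial\Delta_A\subset\pi(\partial A_S)$, and (ii) $\Delta_A$ has no closed component; granting (i) and (ii), every component of $\Delta_A$ is a simple arc and meets $\pi(\partial A_S)$, which is the assertion.

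\emph{The interfaces carry no double point.} A geometric braid has its endpoints at $Q_{2m}$, and the underlying permutation of each $\gamma_i$ preserves the partition of $Q_{2m}$ into consecutive pairs (it is the transposition $(2l-1\ 2l)$, respectively the double transposition $(2l-1\ 2l+1)(2l\ 2l+2)$). Hence along the loop $f_S$ the enhanced boundary of the wicket configuration returns to that of $w_0$ at every interface $\{p(t)=p(t_i)\}$, so, a configuration of wickets being determined by its enhanced boundary, $f_S(t_i)=w_0$ for $i=0,\dots,c$. Since $\pi(w_0)$ is a union of $m$ pairwise disjoint semicircular arcs, $\Delta_A$ is disjoint from every interface, and therefore $\Delta_A=\bigsqcup_{i=1}^c(\Delta_A\cap A_i)$; it thus suffices to analyse a single slab.

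\emph{Slab analysis.} In the slab $A_i$ only the one or two wickets moved by $\gamma_i$ are non-standard. If $\gamma_i=\sigma_{2l-1}^{\pm1}$, the corresponding half-twist is performed so that $\pi(A_i)$ has no double point at all. If $\gamma_i=\tau_l^{\pm1}$ or $\upsilon_l^{\pm1}$, the two moved wickets are interchanged, and Figure~\ref{Figure: diagrams of A-j} shows that $\pi(A_i)$ has no triple point and that $\Delta_A\cap A_i$ is a disjoint union of simple arcs, each descending to one of the crossings of $\gamma_i$, a point of $\pi(\partial A_S)$. Hence no triple point and no closed component of $\Delta_A$ occurs, and each arc of $\Delta_A$ meets $\pi(\partial A_S)$; combined with the first paragraph this proves the lemma.

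\emph{Main obstacle.} The actual work lies in the slab analysis: for each of the finitely many local models in Figure~\ref{Figure: diagrams of A-j} one must verify that, as $p(t)$ traverses the slab, the self-crossings and mutual crossings of the two active wicket arcs are created and destroyed precisely at the crossings of $\gamma_i$ on $\pi(\partial A_S)$, and never through an interior tangency --- a Reidemeister~II event in the one-parameter family $\{\pi(f_S(t))\}_t$, at which the double point curve would be tangent to a slice; a pair of such events would close a double point curve into a circle disjoint from $\pi(\partial A_S)$. Ruling out these floating circles, together with the absence of triple concurrences, is where all the content of the statement is concentrated; once the local models are pinned down, checking them is routine.
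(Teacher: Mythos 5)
There is a genuine gap, and it sits exactly where you located the content. Your structural premise that ``$A_S$ is swept out by disjoint wicket arcs, hence its projection has no branch points'' is false: being fibered by embedded arcs does not prevent the projection $\pi|_{A_S}$ from having Whitney-umbrella (pinch) points. Concretely, in a slab with $\gamma_i=\sigma_{2l-1}^{\pm1}$ the moving wicket is at every instant the round semicircle over its two feet; at the moment the feet align with the projection direction the projection of that semicircle degenerates to a doubly covered segment, and a generic perturbation resolves this into the standard pinch-point model $(u,t)\mapsto(ut,u^2,t)$ at the top of the wicket. Accordingly your slab claim ``if $\gamma_i=\sigma_{2l-1}^{\pm1}$ then $\pi(A_i)$ has no double point at all'' is also wrong: every crossing of $D_L$ emits exactly one double point arc into the $A_S$-side (the $S$-side arc being part of $\Delta_S$), so the $\sigma_{2l-1}^{\pm1}$ slab contributes one arc of $\Delta_A$ running from that crossing to the branch point at the top of the twisting wicket. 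This is precisely what Figure~\ref{Figure: semi-sheets of s} records --- the local piece of $D_{A,t}$ is broken into two distinct semi-sheets $z_{j_1}\neq z_{j_2}$ --- and that break is used essentially in case (1) of the proof of Lemma~\ref{Lemma: X Y Z, DS R1 R2}, so misdescribing this model is not a harmless slip.

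The lemma's conclusion survives your errors only by luck: the arc you overlooked does meet $\pi(\partial A_S)$ at the crossing, even though its other end is a branch point rather than a boundary point, so $\Delta_A$ is still a union of simple arcs each meeting $\pi(\partial A_S)$. But your proof as written does not establish this, because its case analysis of what can terminate a double point arc omits branch points and because one of the three local verifications is incorrect. The remainder of your argument --- the factorization of $\beta_S$ into the adequate braids of Figure~\ref{Figure: adequate braid gamma-j}, the observation that $f_S(t_i)=w_0$ at the interfaces (so $\Delta_A$ is confined to the slabs), and the reduction to checking the local models of Figure~\ref{Figure: diagrams of A-j} --- is sound and is essentially the paper's route; to repair the proof you should redo the slab analysis allowing branch points, verifying for each of $\sigma_{2l-1}^{\pm1}$, $\tau_l^{\pm1}$, $\upsilon_l^{\pm1}$ that every component of the local double point set is an embedded arc with at least one endpoint on a crossing of $\gamma_i$ (the other endpoint being allowed to be a branch point), and that no closed components or triple points occur.
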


\begin{proof} % ok
   This lemma follows from the assumption that $D_A$ is a union of $D_{A,1}, \ldots, D_{A,c}$ which are unions of copies of subdiagrams as shown in Figure~\ref{Figure: diagrams of A-j}.
   See Figures~\ref{Figure: semi-sheets of s}, \ref{Figure: semi-sheets of k}, and \ref{Figure: semi-sheets of l} for descriptions of semi-sheets of them.
\end{proof}

\begin{figure}[h]
   \centering
   \includegraphics[width = 0.8\hsize]{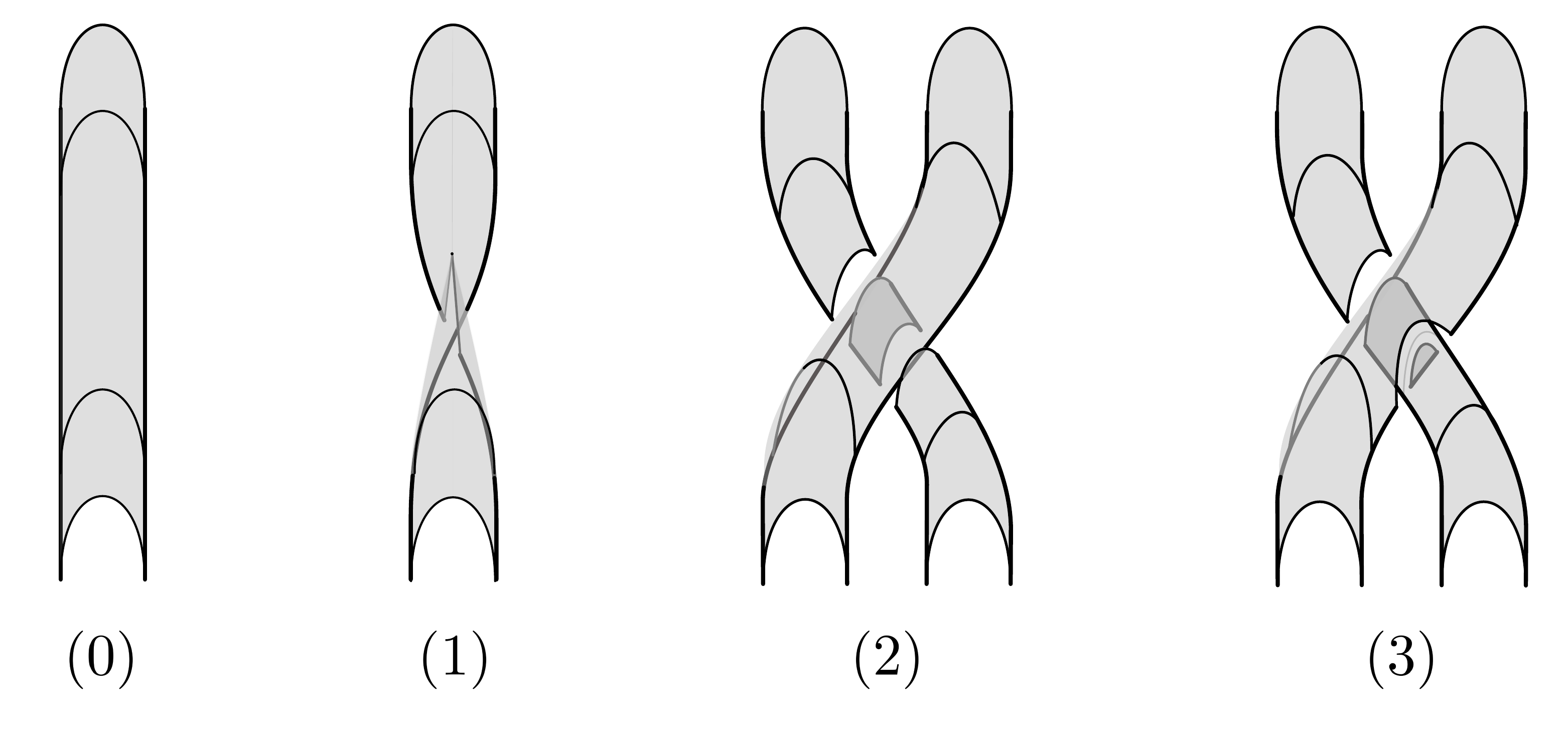}
   \caption{A diagram $D_{A,t}$ consists of one of (1), (2), (3), or their inverses and some copies of (0) ($t = 1,\ldots, c$).}
   \label{Figure: diagrams of A-j}
\end{figure}

Let $s$, $l$, $a$, and $f$ be the numbers of semi-sheets (or semi-arcs) of $D_S$, $D_L$, $D_A$, and $D_F$, respectively.
We denote semi-sheets (or semi-arcs) of $D_S$, $D_L$, $D_A$, and $D_F$ by $x_1, \ldots, x_s$, $y_1, \ldots, y_l$, $z_1, \ldots, z_a$, and $w_1, \ldots w_f$, respectively.
Since $S$ is a braided surface, a semi-sheet of $D_S$ containing $\pi(q_i, y_0)$ is different from a semi-sheet of $D_S$ containing $\pi(q_j, y_0)$ for any $i \neq j$.
This is true for $D_L$ and $D_A$.
Hence, by replacing subscripts of semi-sheets and semi-arcs, we assume that
\begin{itemize}
   \item for $h = 1, \ldots, 2m$, a semi-sheet $x_h$ contains $\pi(q_i, y_0)$,
   \item for $i = 1, \ldots, 2m$, a semi-arc $y_{i}$ contains $\pi(q_{i}, y_0)$, and
   \item for $j = 1, \ldots, m$, a semi-sheet $z_j$ contains $\pi(q_{2i}, y_0)$ and $\pi(q_{2i-1}, y_0)$.
\end{itemize}

We give semi-sheets of $D_A$ and $D_F$ orientations arbitrarily.
We give semi-sheets of $D_S$ the orientation induced from an orientation of $S$ which is compatible with an orientation of $D_2$.
We also give semi-arcs of $D_L$ the orientation induced from semi-sheets of $D_S$.

\begin{remark}\label{Remark: notation of orientations}
   The significance of giving such orientations is that for each wicket $w$ that forms $A_S$, the orientations of semi-sheets of $D_S$ containing endpoints of $w$ are incoherent across $w$.
\end{remark}

For each semi-arc $y_i$, there are unique semi-sheets $x_h$, $z_j$, and $w_k$ such that $y_i \subset x_h$, $y_i \subset z_j$, and $y_i \subset w_k$, respectively.
We denote $h(y_i) = h$, $j(y_i) = j$ and $k(y_i) = k$.
Similarly, for each semi-sheet $x_h$ (and $y_i$), there is a unique semi-sheet $w_k$ such that $x_h \subset w_k$ (and $y_i \subset w_k$), respectively.
We denote $k(x_h) = k$ and $k(y_i) = k$.

For each semi-sheet $x_h$, we define the sign $\varepsilon_h = 1$ if the orientation of $x_h$ is coherent with the orientation of $w_{k(x_h)}$ under the natural inclusion $x_h \subset w_{h(x_h)}$ and $\varepsilon_h = -1$ otherwise.
Similarly, for each semi-arc $y_i$ (and semi-sheet $z_j$), we define the signs $\zeta_i = 1$ (and $\eta_j = 1$) if the orientations of $x_{h(y_i)}$ (and $z_j$) are coherent with the orientations of $w_{k(y_i)}$ (and $w_{k(z_j)}$) under the natural inclusions $x_{h(y_i)} \subset w_{k(y_i)}$ (and $z_j \subset w_{k(z_j)}$) and $\zeta_i = -1$ (and $\eta_j = -1$) otherwise, respectively.

We denote $X_h = x_h^{\varepsilon_h}$, $Y_i = y_i^{\zeta_i}$, and $Z_j = z_j^{\eta_j}$, and put
\begin{align*}
   R_1^X ~=~ \{ Y_i = X_{h(y_i)} ~|~ i = 1, \ldots, l~\} \quad \mbox{and} \quad
   R_1^Z = \{ Y_i = Z_{j(y_i)} ~|~ i  = 1, \ldots, l~\}.
\end{align*}

\begin{lemma}\label{Lemma: DS DA R0}
   $X(F)$ has the following presentation:
   \begin{align}\label{align: X Y Z, DS R1}
   \left\langle
   \begin{array}{c|}
      x_1, \ldots, x_s,~
      y_1, \ldots, y_l,~ z_1, \ldots, z_a
   \end{array}
   \begin{array}{c}
      \mbox{A-rel}_{D_S}, \mbox{B-rel}_{D_S},
      R_1^X, R_1^Z
   \end{array}
   \right\rangle_{\mathrm{sq}}.
   \end{align}
   Here, each generators $x_h$, $y_i$, and $z_j$ in (\ref{align: X Y Z, DS R1}) are the images of $x_h \in X(S)$, $y_i \in X(L)$, and $z_j \in X(A)$ by the homomorphisms induced from the inclusion maps, respectively.
\end{lemma}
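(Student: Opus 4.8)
The plan is to begin from the diagrammatic presentation of $X(F)$ and transform it into $(\ref{align: X Y Z, DS R1})$ by Tietze moves, the essential work being a local analysis near $\pi(L)$. Identifying $\R^4$ with $D^4$, Proposition~\ref{Proposition: pres. obtained by semi-sheets} (equivalently Proposition~\ref{Proposition: qdle pres. in 4-space}) gives $X(F)\cong X(D_F)=\langle w_1,\dots,w_f\mid \mbox{A-rel}_{D_F},\mbox{B-rel}_{D_F}\rangle_{\mathrm{sq}}$. I would first record the structure of $D_F$. Since $S\subset D_1\times D_2$ and $A_S\subset D_1\times N$ meet only along $L\subset D_1\times\partial D_2$, one checks that $\pi(S)\cap\pi(A_S)$ lies over $\pi(L)$, that $\Delta_S$ and $\Delta_A$ are subcomplexes of $\Delta_F$, and that the only further singular points of $\pi(F)$ are the crossings of $D_L$, each being an endpoint where an arc of $\Delta_S$ and a simple arc of $\Delta_A$ (here Lemma~\ref{Lemma: Adjacency between double point curves in D-S and D-A} is used) meet and pass through one another. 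Hence every double point curve of $D_F$ is a splicing of arcs of $\Delta_S$ and simple arcs of $\Delta_A$ along crossings of $D_L$, and in particular no double point curve of $D_F$ lies entirely in $\pi(A_S)$. Since moreover $\mathrm{int}(x_h)$, $\mathrm{int}(z_j)$ and the semi-arcs $y_i$ are disjoint from $\Delta_F$, each of $x_h$, $y_i$, $z_j$ is contained in a unique semi-sheet $w_{k(x_h)}$, $w_{k(y_i)}$, $w_{k(z_j)}$ of $D_F$, and (being $2$-dimensional) every $w_k$ contains at least one $x_h$ or one $z_j$.

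Next I would perform the Tietze moves. Using (T3), introduce the generators $x_h$ ($1\le h\le s$), $y_i$ ($1\le i\le l$) and $z_j$ ($1\le j\le a$) together with the defining relators $X_h=w_{k(x_h)}$, $Y_i=w_{k(y_i)}$, $Z_j=w_{k(z_j)}$ (in the notation $X_h=x_h^{\varepsilon_h}$, etc., introduced just before the lemma; each right-hand side is a single generator, so these are legitimate). For each $w_k$ fix a representative $g_k$ among the symbols $X_h$ with $x_h\subset w_k$ (preferred) or, if $w_k$ contains no $x_h$, among the $Z_j$ with $z_j\subset w_k$; then, applying (T1) and (T4), substitute $w_k$ throughout by $g_k$ and delete $w_k$. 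What survives is a presentation on the generators $\{x_h\}\cup\{y_i\}\cup\{z_j\}$ whose relators are (a) the rewritten defining relators of the $y_i$, of the $z_j$, and of the $x_h$ and $z_j$ not chosen as representatives, and (b) the rewritten A- and B-relations of $D_F$.

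Now I would identify these relators modulo the relations $R_1^X\cup R_1^Z$. For (a): since $y_i\subset x_{h(y_i)}\subset w_{k(y_i)}$ and $y_i\subset z_{j(y_i)}\subset w_{k(y_i)}$, the rewritten $y_i$- and $z_j$-defining relators are exactly $Y_i=X_{h(y_i)}$ (this is $R_1^X$) and, combined with these, $Y_i=Z_{j(y_i)}$ (this is $R_1^Z$); any remaining identification between non-chosen representatives lying in a common $w_k$ is a chain of such relations, hence a consequence of $R_1^X\cup R_1^Z$. For (b): a double point stratum of $D_F$ lying in $\mathrm{int}\,\pi(S)$ yields, after the substitution and modulo $R_1^X\cup R_1^Z$, precisely the corresponding relation of $\mbox{A-rel}_{D_S}\cup\mbox{B-rel}_{D_S}$, the orientation signs matching by the definition of the $\varepsilon_h$; and a double point curve straddling $\pi(L)$ restricts on the $S$-side to an arc of $\Delta_S$ reaching $\partial D_S$, and one checks that its four adjacent semi-sheets of $D_F$ rewrite to the four semi-sheets of $D_S$ adjacent to that arc, so it too yields the corresponding relation of $\mbox{A-rel}_{D_S}\cup\mbox{B-rel}_{D_S}$. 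The only remaining relators of (b) are those read off the simple arcs of $\Delta_A$ and the crossings of $D_L$; I claim each is a consequence of $\mbox{A-rel}_{D_S}\cup\mbox{B-rel}_{D_S}\cup R_1^X\cup R_1^Z$. Granting this, deleting them by (T2) and noting that $\mbox{A-rel}_{D_S}$, $\mbox{B-rel}_{D_S}$, $R_1^X$, $R_1^Z$ are all present leaves exactly $(\ref{align: X Y Z, DS R1})$. The closing statement about the generators then follows from the identification $X(F)\cong X(D_F)$ via meridional nooses recalled after Proposition~\ref{Proposition: qdle pres. in 4-space}: the noose defining $x_h$ factors through $S\hookrightarrow F$, and likewise $y_i$ through $L\hookrightarrow F$ and $z_j$ through $A_S\hookrightarrow F$.

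The main obstacle is the claim just granted: that every relation coming from a simple arc of $\Delta_A$ (and from a crossing of $D_L$) is a symmetric-quandle consequence of $\mbox{A-rel}_{D_S}$, $\mbox{B-rel}_{D_S}$, $R_1^X$, $R_1^Z$. This is where Lemma~\ref{Lemma: Adjacency between double point curves in D-S and D-A} does its work: such an arc is simple with a free end on $\partial D_A$, so it is locally modeled by one of the finitely many pictures of Figure~\ref{Figure: diagrams of A-j}, and its B-relation must be compared with the B-relation of the arc of $\Delta_S$ that continues it across $\pi(L)$. Carrying this out requires careful bookkeeping of the signs $\varepsilon_h,\zeta_i,\eta_j$ and repeated use of the fact that the orientations of the two semi-sheets of $D_S$ meeting across any wicket of $A_S$ are incoherent (Remark~\ref{Remark: notation of orientations}); I expect this case analysis to be the longest part of the proof.
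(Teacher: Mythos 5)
Your overall route is the paper's own: start from the semi-sheet presentation of $X(D_F)$, introduce the generators $x_h, y_i, z_j$ by Tietze moves, eliminate the $w_k$'s using the structure provided by Lemma~\ref{Lemma: Adjacency between double point curves in D-S and D-A}, and chain identifications across $D_L$. However, the claim you leave ``granted'' as the main obstacle is mislocated. Since, as you yourself observe from that lemma, no double point stratum of $D_F$ lies entirely in $\pi(A_S)$, the presentation of $X(D_F)$ carries exactly one A-relator and one B-relator per stratum of $D_F$, and each such relator can be read off on the $D_S$-portion of its stratum; the simple arcs of $\Delta_A$ and the crossings of $D_L$ contribute no separate relators of their own, so after your two cases (strata in $\mathrm{int}\,\pi(S)$ and strata straddling $\pi(L)$) there is nothing left to establish. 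The wicket-model case analysis you anticipate (Figure~\ref{Figure: diagrams of A-j}, with the sign bookkeeping of Remark~\ref{Remark: notation of orientations}) is what the paper uses for the \emph{next} step, the reduction of $R_1^Z$ to $R_2$ in Lemma~\ref{Lemma: X Y Z, DS R1 R2}; it is not needed for the present lemma.

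Conversely, the step you dispose of in one clause --- ``any remaining identification between non-chosen representatives lying in a common $w_k$ is a chain of such relations'' --- is exactly where the genuine content of this lemma sits, and it needs the argument the paper gives: choose a path in $w_k$ from $x_{h_0}$ to $x_h$ meeting the semi-arcs of $D_L$ transversely in $y_{i_1},\dots,y_{i_{2q}}$, and chain $X_{h_{u-1}} = Y_{i_{2u-1}} = Z_{j_u} = Y_{i_{2u}} = X_{h_u}$ using $R_1^X$ and $R_1^Z$ alternately (Figure~\ref{Figure: partition of semi-sheet w-k}); the same chains dispose of the relators identifying $w_k$ with its various $Z_j$'s. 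Relatedly, your fallback of taking some $Z_j$ as the representative $g_k$ when $w_k$ contains no $x_h$ must be excluded rather than hedged: if such a $w_k$ existed, your elimination would leave relators of the form $Z_j = Z_{j'}$ that are not consequences of $R_1^X \cup R_1^Z$, and the reduction to (\ref{align: X Y Z, DS R1}) would stall. The paper avoids this by noting, again via Lemma~\ref{Lemma: Adjacency between double point curves in D-S and D-A}, that every semi-sheet $w_k$ of $D_F$ contains some semi-sheet $x_h$ of $D_S$, so the preferred choice is always available; you should verify this point rather than allow the alternative.
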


\begin{proof}
   % We start to construct a presentation of $X(A_S)$.
   From Proposition~\ref{Proposition: pres. obtained by semi-sheets}, $X(F)$ has the following presentation:
   \begin{align}\label{align: W, DF}
      X(F) ~=~ \left\langle
      \begin{array}{c}
         w_1, \ldots, w_f
      \end{array}
      \begin{array}{|c}
         \mbox{A-rel}_{D_F}, \mbox{B-rel}_{D_F}
      \end{array}
      \right\rangle_{\mathrm{sq}}.
   \end{align}

   We put $R_0^X = \{w_{k(x_h)} = X_h ~|~ h = 1, \ldots, s\}$ and $R_0^Z = \{ w_{k(z_j)} = Z_j ~|~ j = 1, \ldots, a\}$.
   We add new generators $x_1, \ldots, x_s$, $z_1, \ldots, z_a$, and $y_1, \ldots, y_l$ with $R_0^X$, $R_0^Z$, and $R_1^Z$ to (\ref{align: W, DF}):
   \begin{align}\label{align: X Y Z W, R0 DF}
      X(F) ~=~ \left\langle
   \begin{array}{c|}
      x_1, \ldots, x_s,~ y_1, \ldots, y_l,\\
      z_1, \ldots, z_a,~ w_1, \ldots w_f
   \end{array}
   \begin{array}{c}
      \mbox{A-rel}_{D_F}, \mbox{B-rel}_{D_F},
      R_0^X, R_0^Z, R_1^Z
   \end{array}
   \right\rangle_{\mathrm{sq}}.
   \end{align}
   % For convenience, we refer to $Y_i = Z_{k(i)} \in R_0$ as the \textit{relation for $y_i$}.

   Each double point stratum of $D_S$ is contained in one of $D_F$.
   Hence A-relations (and B-relations) for $D_S$ are consequences of an A-relation (and B-relation) for $D_F$ and relators in $R_0^X$, respectively.
   % Similarly, each crossing of $D_L$ is contained in some double point stratum of $D_S$.
   % So we can add $\mbox{A-rel}_{D_L}$ and $\mbox{B-rel}_{D_L}$ to (\ref{align: X Y Z W, R0 DF}) by Tietze moves (T1).
   Conversely, it follows from Lemma~\ref{Lemma: Adjacency between double point curves in D-S and D-A} that every double point stratum of $D_F$ contains one of $D_S$.
   This implies that A-relations (and B-relation) for $D_F$ is a consequence of A-relations (and B-relation) for $D_S$ and relations in $R_0^X$.
   Thus, we replace $\mbox{A-rel}_{D_F}$ and $\mbox{B-rel}_{D_F}$ in (\ref{align: X Y Z W, R0 DF}) with $\mbox{A-rel}_{D_S}$ and $\mbox{B-rel}_{D_S}$:
   \begin{align}\label{align: X Y Z W, DS R0}
      X(F) ~=~ \left\langle
   \begin{array}{c|}
      x_1, \ldots, x_s,~ y_1, \ldots, y_l,\\
      z_1, \ldots, z_a,~ w_1, \ldots w_f
   \end{array}
   \begin{array}{c}
      \mbox{A-rel}_{D_S}, \mbox{B-rel}_{D_S},
      R_0^X, R_0^Z, R_1^Z
   \end{array}
   \right\rangle_{\mathrm{sq}}.
   \end{align}

   Since $R_1^X$ consists of consequences of relators in (\ref{align: X Y Z W, DS R0}),
   we add $R_1^X$ to (\ref{align: X Y Z W, DS R0}):
   \begin{align}\label{align: X Y Z W, R0 R1 DS}
      X(F) ~=~ \left\langle
         \begin{array}{c|}
            x_1, \ldots, x_s,~ y_1, \ldots, y_l,\\
            z_1, \ldots, z_a,~ w_1, \ldots, w_f
         \end{array}
         \begin{array}{c}
            \mbox{A-rel}_{D_S}, \mbox{B-rel}_{D_S}.
            R_0^X, R_0^Z, R_1^X, R_1^Z
         \end{array}
         \right\rangle_{\mathrm{sq}}.
      \end{align}

   By Lemma~\ref{Lemma: Adjacency between double point curves in D-S and D-A}, each semi-sheet $w_k$ contains some semi-sheet $x_h$ of $D_S$.
   Hence there exists a relator $w_{k(x_h)} = X_h$ in $R_0^X$ such that $k(x_h) = k$.
   We fix such a relator for each $w_k$ and denote it as $r(w_k)$.
   Let $R_0^W = \{ r(w_k) ~|~ k = 1, \ldots, f\}$.

   Let $w_{k(x_h)} = X_h$ be a relator in $R_0^X\setminus R_0^W$.
   We will show that $w_{k(x_h)} = X_h$ is a consequence of $R_0^W$, $R_1^X$, and $R_1^Z$.
   By definition, there is a relator $w_k = X_{h_0}$ in $R_0^W$ such that $k(x_h) = k$.
   % Since the semi-sheet $w_k$ is connected, we consider a path $p$ on $w_k$ such that its endpoints are a point of the semi-sheet $x_h$ and a point of $x_{h_0}$.
   Let $p: [0,1] \to w_k$ be a path on the semi-sheet $w_k$ such that endpoints $p(0)$ and $p(1)$ are on semi-sheets $x_{h_0}$ and $x_h$, respectively.
   We may assume that the image of $p$ intersects with semi-arcs $y_{i_1}, \ldots, y_{i_{2q}}$ of $D_L$ transversely on $w_k$, see Figure~\ref{Figure: partition of semi-sheet w-k}.
   For $u = 1, \ldots, q$, we denote $h_{u} = h(y_{i_{2u}})$ and $j_u = j(y_{i_{2u}})$.
   Then, $Y_{i_{2u}} = X_{h_u}$ and $Y_{i_{2u-1}} = X_{h_{u-1}}$ are relators in $R_1^X$, and
   $Y_{i_{2u}} = Z_{j_u}$ and $Y_{i_{2u-1}} = Z_{j_u}$ are relators in $R_1^Z$.
   Hence $Y_{i_1} = Y_{i_{2q}}$ is a consequences of $R_1^X$ and $R_1^Z$.
   Since $w_{k} = X_{h_0}$, $Y_{i_1} = X_{h_0}$, and $Y_{i_{2q}} = X_{h}$ are relators in $R_0^W$, $R_1^X$, and $R_1^Z$, respectively, $w_{k(x_h)} = X_h$ is a consequence of them.

   \begin{figure}[h]
      \centering
      \includegraphics[width = \hsize]{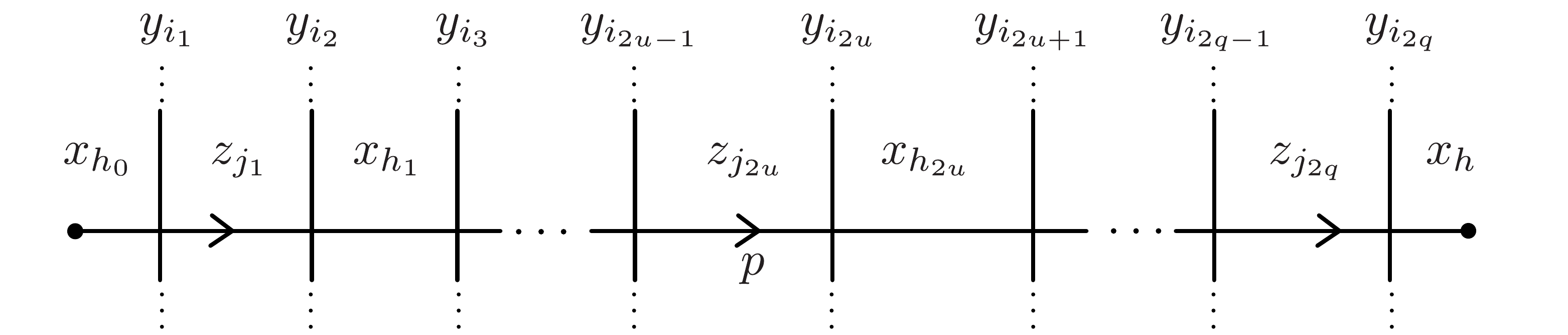}
      \caption{An illustration of intersections of the image of $p$ and semi-arcs of $D_L$ on the semi-sheet $w_k$.}
      \label{Figure: partition of semi-sheet w-k}
   \end{figure}

   Similarly, every relator in $R_0^Z$ is a consequence of $R_0^W$, $R_1^X$, and $R_1^Z$.
   Thus, we remove $R_0^X\setminus R_0^W$ and $R_0^Z$ from (\ref{align: X Y Z W, R0 R1 DS}):
   \begin{align}\label{align: X Y Z W, R0w R1 DS}
      X(F) ~=~ \left\langle
   \begin{array}{c|}
      x_1, \ldots, x_s,~ y_1, \ldots, y_l,\\
      z_1, \ldots, z_a,~ w_1, \ldots, w_f
   \end{array}
   \begin{array}{c}
      \mbox{A-rel}_{D_S}, \mbox{B-rel}_{D_S},
      R_0^W, R_1^X, R_1^Z
   \end{array}
   \right\rangle_{\mathrm{sq}}.
   \end{align}
   Then generators $w_1, \ldots, w_f$ do not occur in relators in (\ref{align: X Y Z W, R0w R1 DS}) except for $R_0^W$ so that we remove $w_1, \ldots, w_f$ and $R_0^W$ from (\ref{align: X Y Z W, R0w R1 DS}).
   Hence we obtain (\ref{align: X Y Z, DS R1}).
\end{proof}

Next, we define a subset $R_2$ of $R_1^Z$.
Let $Q = \{Y_{2j-1} = Z_{j}, Y_{2j} = Z_{j} ~|~ j = 1, \ldots, m \}$ be a subset of $R_1^Z$.
If $m = a$, we simply define $R_2 = R_1^Z = Q$, where $m$ is the half of the degree of $S$.
If $m < a$, we need to fix a relator $r_j: Y_{i} = Z_j$ in $R_1^Z$ for $j = m+1, \ldots, a$ as follows:

Let $t(y_i)$ (and $t(z_j)$) be the minimum numbers of second subscripts of diagrams $D_{A, t}$ which intersect with $y_i$ (and $z_j$) in $D_A$, respectively.
Clearly, $t(y_i) \geq t(z_{j(y_i)})$ holds for any $i$.
In addition, for $j = 1,\ldots, a$, there exists a semi-sheet $y_i$ such that $t(y_i) = t(z_j)$.
For $j = 1, \ldots, a$ and $t = 1, \ldots, c$, we put
\begin{align*}
   R_1^{t} ~&=~ \{ Y_i = Z_{j(y_i)} ~|~ t(i)=t ~(i = 1, \ldots, l)\},~\mbox{and}\\
   R_1^{t,j} ~&=~ \{ Y_i = Z_{j(y_i)} ~|~ t(i)=t,~ j(y_i)=j  ~(i = 1, \ldots, l)\}.
\end{align*}
Here, a relator $Y_i = Z_k$ belonging to $R_1^{t}$ ($t\geq 1$) indicates that the semi-arc $y_i$ intersects with $D_{A,t}$ and does not intersect with $D_{A, 1}, \ldots, D_{A,t-1}$.
By definition, $R_1^{t,j}$ is the empty set for any $t < t(z_j)$, and $R_1^{t(z_j), j}$ is a non-empty set.
For each $j = m+1, \ldots, a$, we fix a relator $r_j \in R_1^{t(z_j), j}$, and we define
\[
   R_2 = Q ~\cup~ \left\{r_{m+1}, r_{m+2}, \ldots, r_a \right\}.
\]

\begin{lemma}\label{Lemma: X Y Z, DS R1 R2}
   $X(F)$ has the following presentation:
   \begin{align}\label{align: X Y Z, DS R1 R2}
      \left\langle
         \begin{array}{c|}
            x_1, \ldots, x_s,~
            y_1, \ldots, y_l,~ z_1, \ldots, z_a
         \end{array}
         \begin{array}{l}
            \mbox{A-rel}_{D_S}, \mbox{B-rel}_{D_S}, R_1^X, R_2
         \end{array}
      \right\rangle_{\mathrm{sq}}.
   \end{align}
\end{lemma}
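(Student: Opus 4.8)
The plan is to start from the presentation~(\ref{align: X Y Z, DS R1}) given by Lemma~\ref{Lemma: DS DA R0} and show, via Tietze moves (Proposition~\ref{Theorem: Tietze theorem for symmetric quandle}), that the relator set $R_1^Z$ can be replaced by its subset $R_2$. Concretely, I must prove two things: (i) every relator of $R_1^Z$ is a consequence of $\mbox{A-rel}_{D_S}$, $\mbox{B-rel}_{D_S}$, $R_1^X$, and $R_2$; and (ii) conversely $R_2 \subset \langle\langle R_1^Z \cup (\text{rest}) \rangle\rangle_{\mathrm{sq}}$, which is immediate since $R_2 \subset R_1^Z$. So the whole content is direction (i): adding $R_2$ to the relators is harmless (a (T1) move), and then deleting $R_1^Z \setminus R_2$ is legitimate (a (T2) move) precisely when each deleted relator is a consequence of the remaining ones.

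First I would dispose of the case $m = a$, where by definition $R_2 = R_1^Z = Q$ and there is nothing to prove. Assume $m < a$. Fix a relator $r: Y_i = Z_{j(y_i)}$ in $R_1^Z \setminus R_2$ and set $j = j(y_i)$, $t = t(y_i)$; by the remark after the definition of $R_1^{t,j}$ we have $t \geq t(z_j)$, and I want to show $r$ is a consequence of $\mbox{A-rel}_{D_S}$, $\mbox{B-rel}_{D_S}$, $R_1^X$, and $R_2$. The key geometric input is that the semi-sheet $z_j$ of $D_A$ is connected, so any two semi-arcs $y_i, y_{i'}$ with $j(y_i) = j(y_{i'}) = j$ are joined by a path $p \colon [0,1] \to z_j$ inside that semi-sheet; I may take $p$ transverse to the semi-arcs of $D_L$ lying on $z_j$. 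Running along $p$ and reading off, at each transverse intersection with a semi-arc $y_{i''} \subset z_j$ (which also lies on some semi-sheet $x_{h''}$ of $D_S$, namely $h'' = h(y_{i''})$), the relator $Y_{i''} = X_{h''}$ of $R_1^X$, together with the A-relation/B-relation of $D_S$ governing how $x_{h''}$ changes across the double-point stratum of $D_S$ that the path crosses, lets me propagate the identity: $Z_j = Y_i$ is a consequence of $R_1^X$, $\mbox{A-rel}_{D_S}$, $\mbox{B-rel}_{D_S}$, and any one relator $Y_{i_0} = Z_j$ in $R_1^Z$ with $j(y_{i_0}) = j$. It therefore suffices to produce, for each $j$, one such "base" relator that is already a consequence of $R_2$: for $j \leq m$ this is $Y_{2j-1} = Z_j \in Q \subset R_2$, and for $j > m$ this is exactly the chosen $r_j \in R_1^{t(z_j),j} \subset R_2$. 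Hence every relator of $R_1^Z \setminus R_2$ is a consequence of the retained relators, and the Tietze moves (T1) then (T2) yield presentation~(\ref{align: X Y Z, DS R1 R2}).

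The main obstacle I anticipate is making the "propagation along $p$" step rigorous: when the path $p$ on the semi-sheet $z_j$ crosses a double-point stratum, the relevant semi-sheet $x_{h''}$ of $D_S$ may change (to $x_{h''}^{\pm x}$ for some generator $x$ coming from an upper semi-sheet), so the naive equality $Z_j = X_{h(y_{i''})}$ is not literally constant along $p$; one must track the conjugating words carefully and check they cancel consistently, using the A-relations and B-relations of $D_S$ and the explicit form of $\mbox{Artin}(\sigma_i)$. This is the same bookkeeping already carried out in the proof of Lemma~\ref{Lemma: DS DA R0} for paths on the semi-sheets $w_k$ of $D_F$ (see Figure~\ref{Figure: partition of semi-sheet w-k}), so I would adapt that argument almost verbatim, replacing $w_k$ by $z_j$ and $D_F$-data by $D_S$-data; the orientation conventions fixed in Remark~\ref{Remark: notation of orientations} are what guarantee the signs $\varepsilon_h, \zeta_i, \eta_j$ interact correctly. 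A minor additional point is to verify that the path $p$ meets an \emph{even} number of semi-arcs between consecutive relators of interest — but this follows from the structure of the local models in Figure~\ref{Figure: diagrams of A-j}, exactly as in the preceding lemma.
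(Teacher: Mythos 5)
Your reduction to showing that each relator of $R_1^Z\setminus R_2$ is a consequence of the retained relators is the right frame, but the mechanism you propose for that step does not work, and this is where all the content of the lemma lies. Your key claim is that, because the semi-sheet $z_j$ is connected, a path $p$ in $z_j$ transverse to the semi-arcs of $D_L$ lets you propagate $Y_{i_0}=Z_j$ to every other $Y_i=Z_j$ using only $\mbox{A-rel}_{D_S}$, $\mbox{B-rel}_{D_S}$ and $R_1^X$. Geometrically this picture is off: $\pi(L)=\partial\pi(A_S)$, so the semi-arcs of $D_L$ lie in the \emph{boundary} of $z_j$, not in its interior, and the interior of a semi-sheet meets no double point strata at all; a path in $z_j$ therefore crosses nothing, and there are no A/B-relations of $D_S$ ``governing how $x_{h''}$ changes'' along it. The bookkeeping of Lemma~\ref{Lemma: DS DA R0} is not transferable almost verbatim as you suggest: there the path lies on a semi-sheet $w_k$ of $D_F$, whose interior genuinely contains semi-arcs of $D_L$ separating the $S$-portion from the $A$-portion of $w_k$, and, crucially, the chaining in that proof uses relators of $R_1^Z$ --- exactly the relators you are now trying to delete.

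The claim ``one base relator per $j$ suffices by connectivity'' is in fact false, not merely unproven. If connectivity of $z_j$ forced $Y_i=Y_{i'}$ for two boundary semi-arcs from the $D_S$-relations and $R_1^X$ alone, then already for $j\le m$ one of the two relators $Y_{2j-1}=Z_j$, $Y_{2j}=Z_j$ of $Q$ would be redundant, and the plat-closure relations $x_{2k}=x_{2k-1}^{-1}$ would follow from $\mbox{A-rel}_{D_S}$, $\mbox{B-rel}_{D_S}$, $R_1^X$; this fails already for the trivial $2$-dimensional braid of degree $2$, whose plat closure is the trivial $2$-knot while the corresponding quotient without the second relator is the free symmetric quandle of rank $2$. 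This is why the paper keeps \emph{two} relators per initial sheet in $Q$, and why the redundancy of the remaining relators must be extracted from the wicket structure of $A_S$ itself: the paper decomposes $\beta_S$ into elementary adequate braids $\gamma_1,\dots,\gamma_c$, inducts on the time parameter $t$, and in each local model of Figure~\ref{Figure: diagrams of A-j} derives the superfluous relators from the A- and B-relations of $D_L$ at the crossings of $\gamma_t$ together with \emph{both} relators of the earlier sheets supplied by the induction hypothesis (see the computation~(\ref{align: a consequence of s}), which needs $Y_{i_1}=Z_{j_1}$ and $Y_{i_2}=Z_{j_1}$ to conclude $y_{i_2}=y_{i_1}^{-1}$); the choice of $r_j$ in $R_1^{t(z_j),j}$ at the minimal time slice is what makes this induction run. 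None of this inductive, crossing-based mechanism is present in your argument, so the proposal has a genuine gap at its central step.
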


\begin{proof}
   We will show that relators in $R_1^Z \setminus R_2$ are consequences of relators in (\ref{align: X Y Z, DS R1 R2}).
   %  (\ref{align: X Y Z, DS R1 R2}) is obtained from (\ref{align: X Y Z, DS R1}) by Tietze moves (T2).
   Since A-relations (and B-relations) for semi-arcs of $D_L$ are consequences of relators in $\mbox{A-rel}_{D_S}$ (and $\mbox{B-rel}_{D_S}$) and $R_1^X$, respectively, we use relators in $\mbox{A-rel}_{D_L}$ and $\mbox{B-rel}_{D_L}$ instead of $\mbox{A-rel}_{D_S}$ and $\mbox{B-rel}_{D_S}$ in the following argument.

   $R_1^Z$ is divided into $c$ subsets $R_1^{1}, R_1^{2}, \ldots, R_1^{c}$.
   Thus, it suffices to show that relators in $R_1^{t}\setminus (R_2 \cap R_1^{t})$ are consequences of relators in (\ref{align: X Y Z, DS R1 R2}).
   We will show this by induction on $t$.
   We put $R_2^{0} = R_2$ and $R_2^{t} = R_2 \cap (\bigcup_{s=1}^{t} R_1^{s})$ for $t \geq 1$.
   Then, $R_2^{t-1}$ consists of consequences of relators in (\ref{align: X Y Z, DS R1 R2}) obtained from the assumption of induction at $t$.
   Since the arguments for the case when $t = 1$ and the case when $t \geq 2$ are parallel, we will discuss both cases simultaneously in the following.

   The diagram $D_{A,t}$ is a disjoint union of one of Figure~\ref{Figure: diagrams of A-j}-(1), -(2), -(3), or their inverses and some copies of Figure~\ref{Figure: diagrams of A-j}-(0).
   If a semi-arc $y_i$ intersects with a copy of Figure~\ref{Figure: diagrams of A-j}-(0) in $D_{A,t}$, then it holds that $t(y_i) < t$ and $t(z_{j(y_i)}) < t$.
   Thus $Y_i = Z_{j(y_i)}$ belongs to $R_2^{t-1}$.
   Hence we consider a relator $Y_i = Z_{j(y_i)}$ in $R_1^Z$ such that $y_i$ intersects with one of Figure~\ref{Figure: diagrams of A-j}-(1), -(2), -(3), or their inverses in $D_{A, t}$.

   (1) We first consider the case $\gamma_t = \sigma_{2k-1}$ ($k = 1, \ldots, m$), that is, $D_{A,t}$ contains a subdiagram as shown in the center of Figure~\ref{Figure: semi-sheets of s}.
   The right of Figure~\ref{Figure: semi-sheets of s} is shown semi-sheets of its subdiagram.
   It holds that $1 \leq j_1 \leq m$ when $t=1$.
   For $t \geq 2$, the semi-sheet $z_{j_1}$ intersects with $D_{A,t-1}$, i.e., $t(z_{j_1}) < t$.
   The semi-sheet $z_{j_2}$ might be satisfied $1\leq j_2 \leq m$.
   In such a case, $R_1^{t}\setminus (R_2 \cap R_1^{t})$ is the empty set.
   Thus we exclude such a case and assume that $j_2$ is more than $m$, which implies $t(z_{j_2}) = t$ and $t(y_{i_3}) = t(y_{i_4}) = t$.
   Hence we have
   \begin{alignat*}{2}
      R_1^{t, j_1} ~&=~ \left\{
         \begin{array}{ll}
            \{ Y_{i_1} = Z_{j_1}, Y_{i_2} = Z_{j_1} \} &(t=1),\\
            \emptyset &(t\geq 2),
         \end{array}
      \right.\\
      R_1^{t,j_2} ~&=~ \{ Y_{i_3} = Z_{j_2}, Y_{i_4} = Z_{j_2} \} \quad (t \geq 1),\\
      R_1^{t} ~&=~ \left\{
         \begin{array}{ll}
            Q ~\cup~  R_1^{t,j_2} &(t=1),\\
            R_1^{t,j_2} &(t\geq 2).
         \end{array}
      \right.\\
      R_2 \cap R_1^{t} ~&=~ \left\{
         \begin{array}{ll}
            Q ~\cup~ \left\{ r_{j_2} \right\} &(t=1),\\
            \left\{ r_{j_2} \right\} &(t\geq 2),
         \end{array}
      \right.
   \end{alignat*}
   where $r_{j_2} \in R_2$ is a fixed relator in $R_1^{t,j_2}$.
   Thus $R_1^{t}\setminus (R_2 \cap R_1^{t})$ consists of one relator, and such a relator, denoted by $\overline{r_{j_2}}$, comes from $R_1^{t,j_2}$.
   \begin{figure}[h]
      \centering
      \includegraphics[height = 35mm]{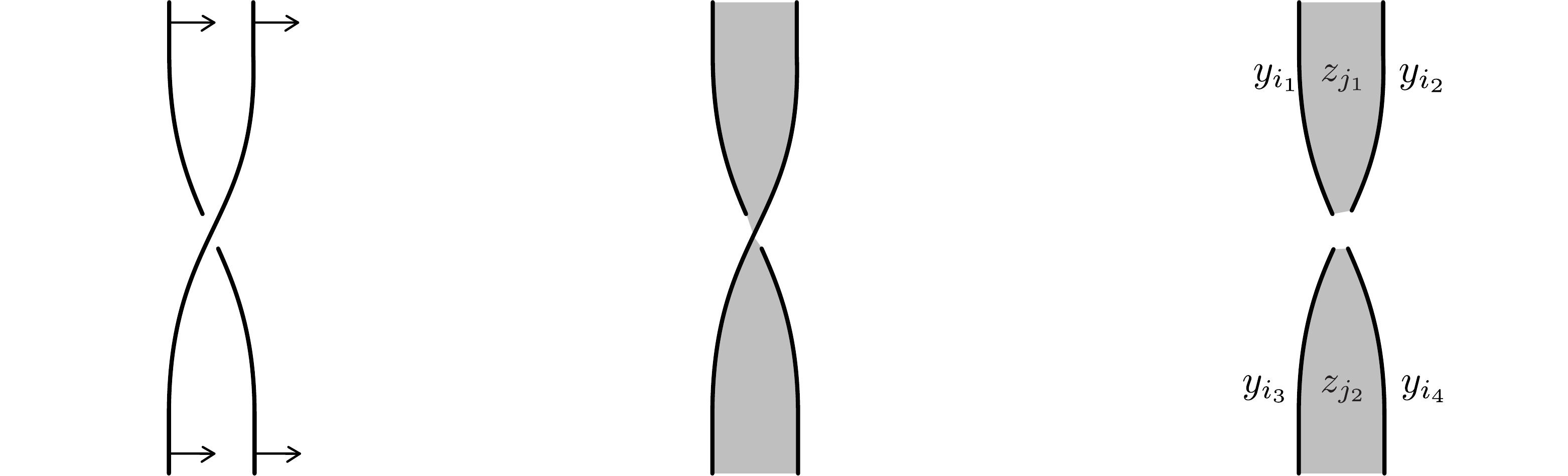}
      \caption{A diagram of a part of $\sigma_{2k-1}$ with normal orientations (left), a diagram of a part of $A_t$ (center), and semi-sheets of a part of $A_t$ (right).}
      \label{Figure: semi-sheets of s}
   \end{figure}

   Then, we can compute that $Y_{i_3} = Y_{i_4}$ is a consequence of relators in $\mbox{A-rel}_{D_L}$, $\mbox{B-rel}_{D_L}$, and $R_2^{t-1}$:
   \begin{equation}
      \begin{split}
         \label{align: a consequence of s}
         Y_{i_4} ~=~ y_{i_4}^{\zeta_{i_4}}
         ~\stackrel{B}{=}~ (y_{i_1}^{y_{i_2}})^{\zeta_{i_4}}
         ~\stackrel{(*)}{=}~ (y_{i_1})^{\zeta_{i_4}}
         ~\stackrel{R}{=}~ Z_{j_1}^{\zeta_{i_1}\zeta_{i_4}}
         ~\stackrel{R}{=}~ y_{i_2}^{\zeta_{i_2}\zeta_{i_1}\zeta_{i_4}}
         ~\stackrel{}{=}~ y_{i_2}^{-\zeta_{i_4}}
         ~\stackrel{A}{=}~ y_{i_3}^{-\zeta_{i_4}}
         ~\stackrel{}{=}~ y_{i_3}^{\zeta_{i_3}} ~=~ Y_{i_3}.
      \end{split}
   \end{equation}
   Here, two elements are connected by $\stackrel{A}{=}$, $\stackrel{B}{=}$, and $\stackrel{R}{=}$ if the equality is given from a relator in $\mbox{A-rel}_{D_L}$, $\mbox{B-rel}_{D_L}$, and $R_2^{t-1}$, respectively.
   The equality expressed by ($*$) is obtained from a result of $y_{i_2} = z_{j_1}^{\eta_{j_1}\zeta_{i_2}} = y_{i_1}^{\zeta_{i_1}\zeta_{i_2}} = y_{i_1}^{-1}$.
   Hence, $\overline{r_{j_2}}$ is a consequence of $r_{j_2}$ and $Y_{i_3} = Y_{i_4}$, that is, $R_1^{t}\setminus(R_2 \cap R_1^{t})$ consists of a consequence of relators in (\ref{align: X Y Z, DS R1 R2}).
   Similarly, we can prove it in the case of $\gamma_t = \sigma_{2k-1}^{-1}$.

   (2) Next, we consider the case $\gamma_t = \tau_k$ ($k = 1, \ldots, m-1$), that is, $D_{A,t}$ contains a subdiagram as in the center of Figure~\ref{Figure: semi-sheets of k}.
   The right of Figure~\ref{Figure: semi-sheets of k} is shown semi-sheets of the subdiagram.

   It holds that $1\leq j_1 \leq m$ and $1\leq j_4 \leq m$ when $t = 1$.
   For $t \geq 2$, $t(y_{i_1}), t(y_{i_2}), t(y_{i_7})$, and $t(t_{i_8})$ are less than $t$.
   For $t \geq 1$, $j_2$, $j_5$, and $j_6$ are more than $m$.
   For a similar reason as in (1), we may assume that $j_3$ is also more than $m$ for $t \geq 1$ and that $j_4$ is also more than $m$ for $t \geq 2$.
   Then, we have
   \begin{alignat}{2}
      \begin{split}\label{align: subset R-ikt case k}
      R_1^{t,j_1} ~&=~ \left\{
         \begin{array}{ll}
            \{ Y_{i_1} = Z_{j_1}, Y_{i_2} = Z_{j_1} \} &(t=1),\\
            \emptyset &(t\geq 2),
         \end{array}
      \right.\\
      R_1^{t,j_2} ~&=~\{ Y_{i_3} = Z_{j_2}, Y_{i_4} = Z_{j_2} \}, \quad\,
      R_1^{t,j_3} ~=~ \{ Y_{i_5} = Z_{j_3}, Y_{i_6} = Z_{j_3} \}, \quad (t\geq 1)\\
      R_1^{t,j_4} ~&=~ \left\{
         \begin{array}{ll}
            \{ Y_{i_{11}} = Z_{j_4}, Y_{i_{12}} = Z_{j_4}, Y_{i_7} = Z_{j_4}, Y_{i_8} = Z_{j_4} \} &(t=1),\\
            \{ Y_{i_{11}} = Z_{j_4}, Y_{i_{12}} = Z_{j_4} \} &(t\geq 2),
         \end{array}
      \right.\\
      R_1^{t,j_5} ~&=~ \{ Y_{i_9} = Z_{j_5} \}, \quad
      R_1^{t,j_6} ~=~ \{ Y_{i_{10}} = Z_{j_6} \} \quad (t\geq 1),\\
      R_1^{t} ~&=~ \left\{
         \begin{array}{ll}
            Q \cup R_1^{t,j_1}\cup \cdots \cup R_1^{t,j_6} &(t=1),\\
            R_1^{t,j_1}\cup \cdots \cup R_1^{t,j_6} &(t\geq 2),
         \end{array}
      \right.\\
      R_2 \cap R_1^{t} ~&=~ \left\{
         \begin{array}{ll}
            Q \cup \{ r_{j_2}, r_{j_3}, Y_{i_9} = Z_{j_5}, Y_{i_{10}} = Z_{j_6} \} &(t=1),\\
            \{ r_{j_2}, r_{j_3}, Y_{i_9} = Z_{j_5}, Y_{i_{10}} = Z_{j_6} \} &(t\geq 2).
         \end{array}
      \right.
   \end{split}
   \end{alignat}
   Let $\overline{r_{j_2}}$ and $\overline{r_{j_3}}$ be relators in $R_1^{t,j_2}$ and $R_1^{t,j_3}$ which are not $r_{j_2}$ and $r_{j_3}$, respectively.
   Then, $R_1^{t}\setminus (R_2 \cap R_1^{t})$ consists of four relators, which are $Y_{i_{11}} = Z_{j_4}$, $Y_{i_{12}} = Z_{j_4}$, $\overline{r_{j_2}}$, and $\overline{r_{j_3}}$.

   \begin{figure}[h]
      \centering
      \includegraphics[height = 35mm]{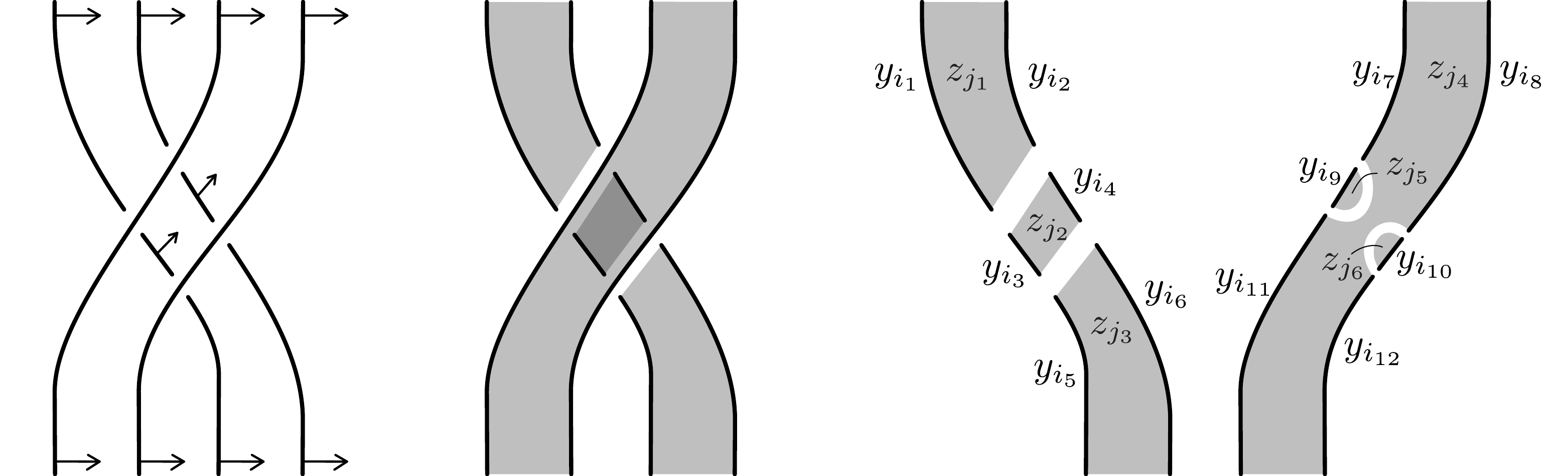}
      \caption{A diagram of a part of $\tau_{k}$ with normal orientations (left), a diagram of a part of $A_1$ (center), and semi-sheets of a part of $A_t$ (right).}
      \label{Figure: semi-sheets of k}
   \end{figure}

   By similar computations as shown in (\ref{align: a consequence of s}), both $Y_{i_3} = Y_{i_4}$ and $Y_{i_5} = Y_{i_6}$ are consequences of relators in (\ref{align: X Y Z, DS R1 R2}).
   Hence, $\overline{r_{j_2}}$ and $\overline{r_{j_3}}$ are also consequences of them.
   By the following computations, $Y_{i_{11}} = Z_{j_4}$ and $Y_{i_{12}} = Z_{j_4}$ are also consequences of them:
   \[
      Y_{i_{11}} ~\stackrel{A}{=}~ Y_{i_9} ~\stackrel{A}{=}~ Y_{i_7} ~\stackrel{R}{=}~ Z_{j_{4}}, \quad
      Y_{i_{12}} ~\stackrel{A}{=}~ Y_{i_{10}} ~\stackrel{A}{=}~ Y_{i_8} ~\stackrel{R}{=}~ Z_{j_{4}}.
   \]
   Therefore, we see that all elements of $R_1^{t}\setminus (R_2 \cap R_1^{t})$ are consequences of (\ref{align: X Y Z, DS R1 R2}).
   This computation can be applied to the case $\gamma_t = \tau_k^{-1}$.

   (3) Finally, we consider the case $\gamma_t = \upsilon_k$ ($k = 1, \ldots, m$), that is, $D_{A,t}$ contains a subdiagram as in the center of Figure~\ref{Figure: semi-sheets of l}.

   \begin{figure}[h]
      \centering
      \includegraphics[height = 35mm]{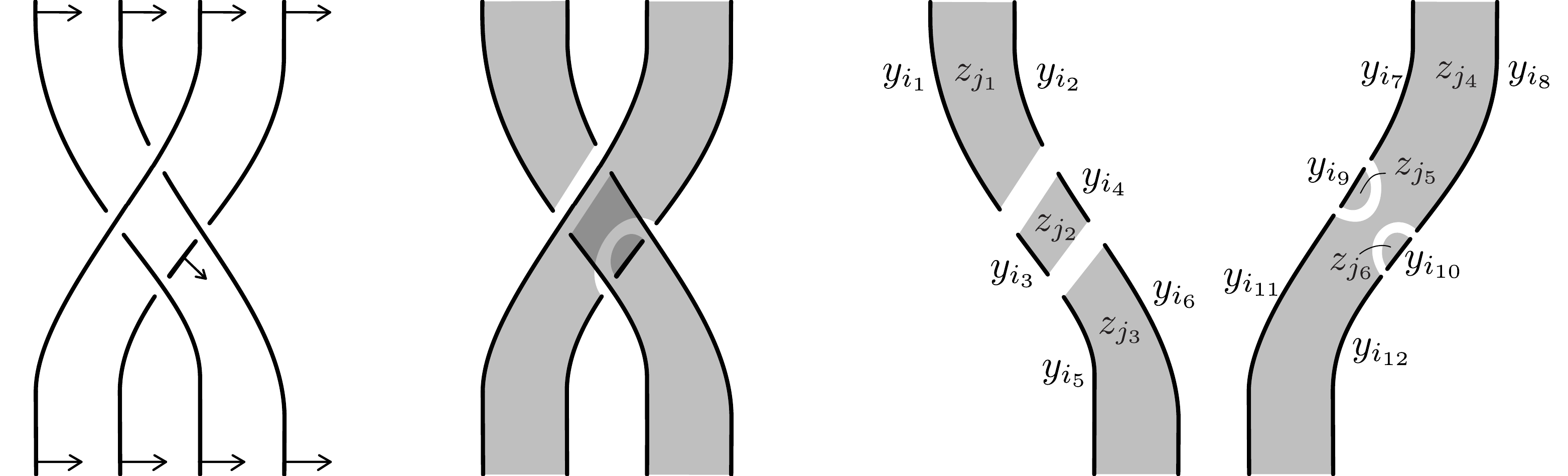}
      \caption{A diagram of a part of $\upsilon_{k}$ with normal orientations (left), a diagram of a part of $A_t$ (center), and semi-sheets of a part of $A_t$ (right).}
      \label{Figure: semi-sheets of l}
   \end{figure}

   Since semi-sheets are same for the case (2), $R_1^{t}$ and $R_2 \cap R_1^{t}$ are the same as (\ref{align: subset R-ikt case k}).
   Hence we can see that $R_1^{t}\setminus R_1$ consists of four elements and that these relators are consequences of relators in (\ref{align: X Y Z, DS R1 R2}).
   This computation can be applied when $\gamma_t = \upsilon_{k}^{-1}$.

   Therefore, we remove relators in $R_1^Z\setminus R_2$ from (\ref{align: X Y Z, DS R1}), and obtain (\ref{align: X Y Z, DS R1 R2}).
\end{proof}

\begin{proof}[Proof of Theorem~\ref{MainTheoremA}]
   By Tietze moves (T4), we remove generators $z_{m+1}, \ldots, z_a$ and relators $r_{m+1}, \ldots, r_a \in R_2$ from (\ref{align: X Y Z, DS R1 R2}):
   \begin{align*}
         X(F) ~=~ \left\langle
      \begin{array}{c}
         x_1, \ldots, x_s,~
         y_1, \ldots, y_l,~ z_1, \ldots, z_{m}
      \end{array}
      \begin{array}{|c}
         \mbox{A-rel}_{D_S}, \mbox{B-rel}_{D_S}, R_1^X, Q
      \end{array}
      \right\rangle_{\mathrm{sq}},
   \end{align*}
   where $R_1^X = \{ X_{h(y_i)} = Y_i ~|~ 1 \leq i \leq l\}$ and $Q = \{ Y_{2k} = Z_k, Y_{2k-1} = Z_k ~|~ 1 \leq k \leq m \}$.

   We put $R_3 = \{ Y_{2k} = Z_k ~|~ 1 \leq k \leq m \}$ and $R_4 = \{ X_{2k-1} = X_{2k} ~|~ 1 \leq k \leq m \}$.
   Since $h(y_{2k}) = h(y_{2k-1}) = k$ for $k = 1, \ldots, 2m$, relators in $R_3$ and $R_4$ are consequences of $R_1^X$ and $Q$.
   Conversely, relators in $Q$ are consequences of $R_1^X$, $R_3$, and $R_4$.
   Thus, we replace $Q$ with $R_3$ and $R_4$.
   Furthermore, we remove generators $z_1, \ldots, z_{m}$ and $y_1, \ldots, y_{l}$ with $R_3$ and $R_1^X$:
   \begin{align*}
      X(F) ~=~ \left\langle
         \begin{array}{c}
            x_1, \ldots, x_s
         \end{array}
         \begin{array}{|c}
            \mbox{A-rel}_{D_S}, \mbox{B-rel}_{D_S}, R_4
         \end{array}
      \right\rangle_{\mathrm{sq}}.
   \end{align*}

   By Remark~\ref{Remark: notation of orientations}, $\varepsilon_{2k} = -\varepsilon_{2k-1}$ holds for $k = 1,\ldots, m$.
   Hence $R_4$ can be rewritten as
   \begin{align}\label{align: X, DS R4}
      X(F) ~=~ \left\langle
         \begin{array}{c}
            x_1, \ldots, x_s
         \end{array}
         \begin{array}{|c}
            \mbox{A-rel}_{D_S}, \mbox{B-rel}_{D_S}, \\
            x_{2k} = x_{2k-1}^{-1} ~(k  = 1, \ldots, m)
         \end{array}
      \right\rangle_{\mathrm{sq}}.
   \end{align}
   Thus we obtain Theorem~\ref{MainTheoremA} from (\ref{align: X, DS R4}) by applying Propositions~\ref{Proposition: pres. obtained by semi-sheets} and \ref{Proposition: pres. of knot quandle of braided surface}.
\end{proof}

\begin{remark}\label{Remark: pres of knot group}
   Since the knot group $G(F) = \pi_1(\R^4\setminus N(F))$ of $F$ is isomorphic to the associated group of the knot symmetric quandle of $F$ (\cite{Kamada2014}), Theorem~\ref{MainTheoremA} induces a group presentation of $G(F)$:
   \[
      G(F) ~\cong~ \left\langle x_1, \ldots, x_{2m}~
      \begin{array}{|c}
         \mathrm{Artin}(\beta_j)(x_{k_i}) = \mathrm{Artin}(\beta_j)(x_{k_i+1}) ~(i=1,\ldots,r)\\
         x_{2k-1}=x_{2k}^{-1} ~ (k = 1,\ldots,m)
      \end{array}
    \right\rangle_{\mathrm{grp}},
   \]
   where $\mathrm{Artin}: B_m \to \Aut(F_{2m})$ is Artin's braid automorphism (cf. \cite{Kamada2002_book}).
\end{remark}

\section{Proofs of Theorems~\ref{Theorem: inequality for plat index} and \ref{MainTheoremB}}\label{Section: Proof of main theorem B}

In this section, we give proofs of Theorems~\ref{Theorem: inequality for plat index} and \ref{MainTheoremB}.
The key concept behind the following proof of Theorem~\ref{Theorem: inequality for plat index} is based on \cite[Lemma~2.7]{Sato-Tanaka2022}.

% \begin{proposition}\label{Proposition ineq between coloring and plat index}
%    For any surface-link $F$ and any finite symmetric quandle $X = (X,\rho)$, the following inequality holds:
%    \[
%       \log_{|X|}\mathrm{col}_{X}(F) \leq \mathrm{Plat}(F).
%    \]
% \end{proposition}

\begin{proof}[Proof of Theorem~\ref{Theorem: inequality for plat index}]
   Let $F$ be a surface-link and $X$ be a finite symmetric quandle.
   Since $\mathrm{col}_{X}(F)$ is defined as the cardinal number of (symmetric quandle) homomorphisms from $X(F)$ to $X$, it suffices to show the inequality
   \[
      \# \mathrm{Hom}(X(F), X) \leq (\# X)^{\mbox{Plat}(F)}.
   \]
   Let $S$ be an adequate braided surface of degree $2m$ such that $\widetilde{S} \cong F$ and $m = \mathrm{Plat}(F)$.
   By Theorem~\ref{MainTheoremA}, the knot symmetric quandle $X(F)$ has a presentation with $2m$ generators $x_1, \ldots, x_{2m}$.
   We can remove a generator $x_{2j-1}$ with a relator $x_{2j} = x_{2j-1}^{-1}$ from the presentation ($j = 1, \ldots, m$).
   Thus the knot symmetric quandle $X(F)$ is generated by $m$ elements.
   Since a homomorphism is determined by the images of generators of the domain quandle, $\# \mathrm{Hom}(X(F), X)$ is less than or equal to $(\# X)^m$, i.e., we have the inequality.
\end{proof}

% \begin{theorem}
%    For any $m \geq 2$ and $g\geq 0$, there exists infinitely many distinct surface-knots $F$ of genus $g$ such that $\mathrm{Plat}(F) = \mathrm{g.Plat}(F) = m$.
% \end{theorem}

\begin{proof}[Proof of Theorem~\ref{MainTheoremB}]
   First, for an integer $m \geq 2$ and a prime number $p \geq 3$, we construct a 2-knot $F(m,p)$ as follows:
   Let $b(m,p)$ be a $(2m-2)$-tuple of $2m$-braids defined as
   \[
      b(m,p) ~=~ (\beta^{-1}\sigma_{1}\beta,~ \beta^{-1}\sigma_{1}^{-1}\beta,~ \beta^{-1}\sigma_{3}\beta,~ \beta^{-1}\sigma_{3}^{-1}\beta,~ \ldots,~ \beta^{-1}\sigma_{2m-3}\beta,~ \beta^{-1}\sigma_{2m-3}^{-1}\beta),
   \]
   where $\beta = (\sigma_2\sigma_4\cdots \sigma_{2m-2})^p \in B_{2m}$.
   Then, $b(m,p)$ is a braid system for a $2$-dimensional braid $S(m,p)$ of degree $2m$.
   Let $F(m,p)$ be the plat closure of $S(m,p)$.
   By definition, we have the inequality $\mathrm{Plat}(F(m,p)) \leq \mathrm{g.Plat}(F(m,p)) \leq m$.
   Furthermore, $F(m,p)$ is a 2-knot because $F(m,p)$ is connected and $\chi(F(m,p)) = m - (2m-2) + m = 2$.

   By Theorem~\ref{MainTheoremA}, the knot symmetric quandle $X(F)$ has the following presentation:
   \begin{align}\label{align presentation}
      X(F) ~=~ \left\langle x_1, \ldots, x_{2m}~
         \begin{array}{|cl}
            \mathrm{Artin}(\beta)(x_{2i-1}) = \mathrm{Artin}(\beta)(x_{2i}) ~ &(i = 1, \ldots, m-1)\\
            x_{2j-1}=\rho(x_{2j}) &(j = 1, \ldots, m)
         \end{array}
       \right\rangle_{\mathrm{sq}}.
   \end{align}

   Let $q \geq 3$ be a prime number and $(R_q, \id)$ be the symmetric dihedral quandle of order $q$.
   Let $f: A = \{x_1, \ldots, x_{2m}\} \to R_q$ be a map and $f_\sharp: \mbox{FSQ}(A) \to R_q$ be a homomorphism induced from $f$.
   We write $y_i = f_\sharp(x_i)$.
   Since $\beta = (\sigma_2\sigma_4\cdots \sigma_{2m-2})^p$, we have
   \[
      f_\sharp(\mathrm{Artin}(\beta)(x_k)) ~=~ \begin{cases}
         y_1 &(k=1),\\
         py_{2k+1} - (p-1)y_{2k} &(k = 2, 4, \ldots, 2m-2),\\
         (p+1)y_{2k-2} - py_{2k-1} &(k = 3, 5, \ldots, 2m-1),\\
         y_{2m} &(k=2m).
      \end{cases}
   \]
   By Lemma~\ref{Lemma: Universal property for symmetric quandle presentation}, $f$ extends to a homomorphism $f:X(F(m,p)) \to R_q$ if and only if the following equations hold:
   \begin{align*}
      \begin{array}{rll}
         y_1 &= ~~ py_3 - (p-1)y_2, \quad &\\
         (p+1)y_{2i-1} - py_{2i-2} &= ~~ py_{2i+1} - (p-1)y_{2i} \quad &(i = 2, \ldots, m-1),\\
         y_{2j-1} &= ~~ y_{2j} \quad &(j = 1,\ldots, m).
      \end{array}
   \end{align*}

   If $p \neq q$, the above equations lead to $y_1 = y_2 = \cdots = y_{2m}$.
   Hence, every symmetric quandle homomorphism is a constant function, i.e., $\mathrm{col}_{(R_q,\id)}(F(m,p)) = q$.
   Since $qy = 0$ holds for any $y \in R_q$, if $p = q$, then
   the above equations can be reduced to $y_{2j-1} = y_{2j}$ $(j = 1,\ldots, m)$, i.e., $\mathrm{col}_{(R_q,\id)}(F(m,p)) = q^m$.

   By Theorem~\ref{Theorem: inequality for plat index}, we have $\mathrm{Plat}(F(m,p)) = \mathrm{g.Plat}(F(m,p)) \geq m$.
   Thus, the plat index and genuine plat index of $F(m,p)$ are equal to $m$.
   In addition, for two distinct prime numbers $p, q \geq 3$, the $(R_p, \id)$-coloring number detects that 2-knots $F(m,p)$ and $F(m,q)$ are inequivalent.
   Hence we obtain infinitely many 2-knots with $\mathrm{Plat}(F(m,p)) = \mathrm{g.Plat}(F(m,p)) = m$.

   Finally, for an integer $g\geq 1$, we construct a surface-knot $F(m,p,g)$ of genus $g$ such that $\mathrm{Plat}(F(m, p, g)) = \mathrm{g.Plat}(F(m, p, g)) = m$.

   Let $b(p,m,g)$ be a $(2m-2+2g)$-tuple of $2m$-braids defined as
   % \[
   %    b(p,m,g) ~=~ (\beta^{-1}\sigma_{1}\beta,\,\ldots,\, \beta^{-1}\sigma_{1}\beta,\, \beta^{-1}\sigma_{1}^{-1}\beta,\,\ldots,\, \beta^{-1}\sigma_{1}^{-1}\beta,\, \beta^{-1}\sigma_{3}\beta,\, \beta^{-1}\sigma_{3}^{-1}\beta,\, \ldots,\, \beta^{-1}\sigma_{2m-3}\beta,\, \beta^{-1}\sigma_{2m-3}^{-1}\beta),
   % \]
   \[
      b(p,m,g) ~=~ b(m,p) \oplus (\beta^{-1} \sigma_{1} \beta, \ldots, \beta^{-1} \sigma_{1} \beta) \oplus (\beta^{-1} \sigma_{1}^{-1} \beta, \ldots, \beta^{-1} \sigma_{1}^{-1} \beta),
   \]
   where $\beta = (\sigma_2\sigma_4\cdots \sigma_{2m-2})^p \in B_{2m}$, and the second and third terms are $g$-tuples of $\beta^{-1} \sigma_{1} \beta$'s and $\beta^{-1} \sigma_{1}^{-1} \beta$'s, respectively.
   Then, $b(p,m,g)$ is a braid system of a $2$-dimensional braid $S(m,p,g)$ of degree $2m$.
   Let $F(m,p,g)$ be the plat closure of $S(m,p,g)$.
   Since $F(m,p,g)$ is connected, orientable and $\chi(F(m,p,g)) = m - (2m-2+2g) + m = 2-2g$, $F(m,p,g)$ is an orientable surface-link of genus $g$.

   By Theorem~\ref{MainTheoremA}, the knot symmetric quandle $X(F(m,p,g))$ is isomorphic to $X(F(m,p))$.
   Hence, the above argument implies that the plat index and the genuine plat index of $F(m,p,g)$ are equal to $m$, and $F(m,p,g)$ and $F(m,q,g)$ are inequivalent if $p \neq q$.
\end{proof}

\section*{Acknowledgment}
The author would like to thank Seiichi Kamada and Yuta Taniguchi for their helpful advice on this research.
This work was supported by JSPS KAKENHI Grant Number 22J20494.

% Since the symmetric quandle coloring number of the connected sum of a surface-link $F$ and a trivial orientable surface-knot is the same as that of $F$, for any $m \geq 2$, the odd prime number $p \geq 3$, and $g\geq 1$, $F(m,p) \# \Sigma_g$ is a surface-knot of genus $g$ whose

% Finally, we show that the connected sum of the 2-knot $F(m,p)$ and a trivial orientable surface-knot $\Sigma_g$ of genus $g$ satisfies $\mathrm{Plat}(F(m,p)\# \Sigma_g) = \mathrm{g.Plat}(F(m,p)\# \Sigma_g)$ for any $g \geq 1$.

\bibliographystyle{plain}
\bibliography{reference.bib}
\end{document}